\documentclass{amsart}

    \usepackage[
    top    = 3cm,
    bottom = 3cm,
    left   = 3cm,
    right  = 3cm]{geometry}

\usepackage{stmaryrd}
\usepackage{amssymb}
\usepackage{mathrsfs}
\usepackage{xcolor}




\newtheorem{theorem}{Theorem}[section]
\newtheorem{lemma}[theorem]{Lemma}
\newtheorem{cor}[theorem]{Corollary}

\newtheorem*{thmA}{Theorem A}
\newtheorem*{thmB}{Theorem B}

\theoremstyle{definition}

\newtheorem{hyp}[theorem]{Hypothesis}

\theoremstyle{remark}
\newtheorem{remark}[theorem]{\bf Remark}

\numberwithin{equation}{section}


\newcommand{\fA}{{\mathfrak{A}}}
\newcommand{\fS}{{\mathfrak{S}}}

\newcommand{\bC}{{\mathbf{C}}}

\newcommand{\bF}{{\mathbf{F}}}

\newcommand{\Aut}{{\operatorname{Aut}}}
\newcommand{\Stab}{{\operatorname{Stab}}}
\newcommand{\IBr}{{\operatorname{IBr}}}
\newcommand{\Irr}{{\operatorname{Irr}}}

\newcommand{\Syl}{{\operatorname{Syl}}}

\newcommand{\dl}{{\operatorname{dl}}}
\newcommand{\cl}{{\operatorname{cl}}}

\newcommand{\Out}{{\operatorname{Out}}}

\newcommand{\clsize}{{\operatorname{clsize}}}
\newcommand{\cd}{{\operatorname{cd}}}


\let\nor=\triangleleft

\begin{document}

\title[On $p$-parts of Brauer character degrees and $p$-regular conjugacy class sizes]{On $p$-parts of Brauer character degrees and $p$-regular conjugacy class sizes of finite groups}

\author{Christine Bessenrodt and Yong Yang}

\address{Institut f\"ur Algebra, Zahlentheorie und Diskrete Mathematik, Leibniz Universit\"at Hannover, Welfengarten 1, D-30167 Hannover, Germany}
\makeatletter
\email{bessen@math.uni-hannover.de}
\makeatother

\address{Department of Mathematics, Texas State University, 601 University Drive, San Marcos, TX 78666, USA and Key Laboratory of Group and Graph Theories and Applications, Chongqing University of Arts and Sciences, Chongqing 402160, China.}

\makeatletter
\email{yang@txstate.edu}
\makeatother

\subjclass[2000]{20C20, 20C15, 20D10, 20D20}
\date{October 13, 2018}



\begin{abstract}
Let $G$ be a finite group, $p$ a prime, and $\IBr_p(G)$ the set of irreducible $p$-Brauer characters of $G$. Let $\bar e_p(G)$ be the largest integer such that $p^{\bar e_p(G)}$ divides $\chi(1)$ for some $\chi \in \IBr_p(G)$. We show that $|G:O_p(G)|_p \leq p^{k \bar e_p(G)}$ for an explicitly given constant $k$. We also study the analogous problem for the $p$-parts of the conjugacy class sizes of $p$-regular elements of finite groups.
\end{abstract}

\maketitle
\maketitle
\Large

\section{Introduction} \label{sec:introduction8}

It is a classic theme to study how arithmetic conditions on characters of a finite group affect the structure of the group. Some of the most important problems in the representation theory of finite groups deal with character degrees and prime numbers.

Let $G$ be a finite group and $P$ be a Sylow $p$-subgroup of $G$; it is reasonable to expect that the $p$-parts
of the degrees of irreducible characters of $G$ somehow restrict the structure of $P$.
The Ito-Michler theorem says that each  irreducible ordinary character degree is coprime to $p$ if and only if $G$ has a normal abelian Sylow $p$-subgroup, which of course implies that $|G:O_p(G)|_p=1$.

We write $e_p(G)$ to denote the exponent of the largest $p$-part of the degrees of the irreducible complex characters of $G$. Moret\'o \cite[Conjecture 4]{Moret1} conjectured that the largest character degree of $P$ is bounded by some function of $e_p(G)$.
For the case of solvable groups,
the conjecture was proved by Moret\'o and Wolf ~\cite{MOWOLF}, and the bounds have been improved by the second author in ~\cite{YY1} and ~\cite{YY5}. Recently, Lewis, Navarro and Wolf ~\cite{LNW} studied the special case when
$e_p(G)=1$, and showed that $|G:O_p(G)|_p \leq p^2$ when $G$ is solvable.
For $p>2$,
Lewis, Navarro, Tiep and Tong-Viet ~\cite{LNTH} also studied the case when
$e_p(G)=1$ for arbitrary finite groups.
The conjecture of Moret\'o was recently settled by Qian and the second author in ~\cite{YY25}.

It is natural to study the Brauer character degree analogue of the Ito-Michler theorem. This has been investigated by Michler ~\cite{MichlerSurvey} and Manz~\cite{Manz}. They showed that each irreducible Brauer character degree is coprime to $p$ if and only if $G$ has a normal Sylow $p$-subgroup, i.e., that $|G:O_p(G)|_p=1$. Thus we would like to ask the following question:

Let $\IBr_p(G)$ be the set of irreducible $p$-Brauer characters of $G$,
and $\bar e_p(G)$ be the largest integer such that $p^{\bar e_p(G)}$ divides $\chi(1)$ for some $\chi \in \IBr_p(G)$, then how does $\bar e_p(G)$ affect the structure of the Sylow $p$-subgroup of $G$? We show the following results as an effort to study this question. This could be viewed as a generalization of the Brauer character degree analogue of the Ito-Michler theorem. We remark that the case $\bar e_p(G)=1$ has been studied in ~\cite{LNTH}.

\begin{thmA}
Let $G$ be a finite group and $\bar e_p(G)$ be the largest integer such that $p^{\bar e_p(G)}$ divides $\chi(1)$ for some $\chi \in \IBr_p(G)$.
\begin{enumerate}
\item If $p \geq 5$, then $\log_p |G: O_p(G)|_p\leq 6.5 \,\bar e_p(G)$.
\item If $p=3$, then $\log_p |G: O_p(G)|_p\leq 20 \,\bar e_p(G)$.
\item If $p=2$, then $\log_p |G: O_p(G)|_p\leq 24 \,\bar e_p(G)$.
\end{enumerate}
\end{thmA}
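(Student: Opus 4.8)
The plan is to reduce immediately to the case $O_p(G)=1$ and then prove $\log_p|G|_p\le k\,\bar e_p(G)$. This reduction is clean: $O_p(G)$ acts trivially on every simple module in characteristic $p$, so inflation is a degree-preserving bijection $\IBr_p(G/O_p(G))\to\IBr_p(G)$; hence $\bar e_p(G)=\bar e_p(G/O_p(G))$, while $O_p(G/O_p(G))=1$ and $\log_p|G/O_p(G)|_p=\log_p|G:O_p(G)|_p$. The engine for the required lower bounds on $\bar e_p$ is Clifford theory for Brauer characters: if $N\nor G$ and $\theta\in\IBr_p(N)$, then some $\phi\in\IBr_p(G)$ lies over $\theta$, and $\phi(1)=e\,|G:I_G(\theta)|\,\theta(1)$ for a positive integer $e$; thus $\bar e_p(G)\ge\log_p\bigl(|G:I_G(\theta)|_p\,\theta(1)_p\bigr)$, and one can recurse inside $I_G(\theta)/N$. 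In particular $\bar e_p$ is monotone under quotients (inflation) and under normal subgroups (the displayed divisibility), hence $\bar e_p(M)\le\bar e_p(G)$ for every subnormal $M\le G$, and in particular $\bar e_p(S)\le\bar e_p(G)$ for each component $S$ of $G$.

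Now assume $O_p(G)=1$ and write $E=E(G)$, $V=\opp$, so $F^*(G)=F(G)E$, $C_G(F^*(G))\le F^*(G)$, and (since $V$ is a $p'$-group) $\log_p|G|_p\le\log_p|E|_p+\log_p|G/EV|_p$. I would bound the two terms separately. For the layer: choosing for each component $S_i$ a character $\phi_i\in\IBr_p(S_i)$ with $\phi_i(1)_p=p^{\bar e_p(S_i)}$, the tensor product $\bigotimes_i\phi_i$ is an irreducible $p$-Brauer character of $E$ (the central-product overlaps on $Z(E)$ being matched), and it lies under some $\phi\in\IBr_p(G)$ with $\phi(1)_p\ge\prod_i\phi_i(1)_p$; hence $\bar e_p(G)\ge\sum_i\bar e_p(S_i)$. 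It therefore suffices to show $\bar e_p(S)\ge c_p\log_p|S|_p$ for a constant $c_p$ and every non-abelian simple $S$ with $p\mid|S|$ (noting that $p'$-components lie in $V$ and so do not contribute to $|G|_p$). For $S$ of Lie type in defining characteristic $p$ the Steinberg character is an irreducible Brauer character of degree exactly $|S|_p$, so $c_p=1$; for $p\ge5$ every simple group has a $p$-block of defect zero, again giving an irreducible Brauer character of degree $|S|_p$ and $c_p=1$; the remaining cases ($p\in\{2,3\}$ with $S$ in cross characteristic, or $S$ alternating/sporadic without a defect-zero block) are treated by the classification together with known estimates on Brauer character degrees, yielding smaller $c_p$ and thereby the larger constants $20$ and $24$.

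For the outer term, $G/EV$ embeds into $\Out(F^*(G))$, whose $p$-part decomposes into: the diagonal/field/graph automorphisms of the components, whose $p$-part is $O(\log\log|E|)$ and hence negligible against $\log_p|E|_p$; the permutations of isomorphic components, a $p$-subgroup of $\Sym(t)$ with $\log_p|\Sym(t)|_p\le t\le\log_p|E|_p$ once the permuted components have order divisible by $p$ (and permutations of isomorphic $p'$-components being absorbed into the $V$-case below); and the automorphisms acting on $V$. For this last part one uses that a $p$-group acting faithfully and coprimely on a finite group $V$ has an orbit on $\Irr(V)$ with stabilizer of bounded $p$-part deficiency (a fixed root of its order), and then Clifford theory applied with $N=V$ as above converts such an orbit into a character in $\IBr_p(G)$ of large $p$-degree. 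Collecting the three contributions and optimizing the constants across the case distinction above produces the stated values of $k$; the dependence on $p$ is inherited entirely from the simple-group input and from the quality of the coprime-orbit and orbit-counting lemmas for small primes.

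The main obstacle I expect is precisely the simple-group input: establishing $\bar e_p(S)\ge c_p\log_p|S|_p$ uniformly, above all for $p=2$ and $p=3$, where $p$-blocks of defect zero need not exist — most conspicuously for large families of alternating groups — so that one cannot simply invoke the Steinberg/defect-zero shortcut and must instead exhibit, case by case over the classification, irreducible Brauer characters whose degree carries a fixed proportion of $|S|_p$; this is the source of the gap between the constant $6.5$ and the constants $20$ and $24$. A secondary, organizational difficulty is that the lower bounds coming from the layer, from the $p'$-radical, and from the outer action must be combined additively rather than by taking a maximum, which is why the argument must route through a single tensor product over all components and a single carefully chosen application of Clifford's theorem, and why the interaction with central products and with $Z(E(G))$ — genuinely relevant when $p=2$ — has to be tracked explicitly.
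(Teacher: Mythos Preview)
Your outline shares key ingredients with the paper (Clifford theory for Brauer characters, the simple-group input, Isaacs' coprime-orbit lemma), but the structural decomposition is different from the paper's and contains genuine gaps. The paper does not split via $E(G)$ and $O_{p'}(G)$. Instead it filters by the maximal normal $p$-solvable subgroup $T$: it bounds $|T:O_p(G)|_p$ by passing further to the solvable radical and then either invoking a block-defect estimate (for $p\ge5$) or iterating up the Fitting series on abelian sections (for $p\in\{2,3\}$); it then bounds $|G/T|_p$ in one stroke, since $\bF^*(G/T)$ is a direct product of non-abelian simple groups each of order divisible by~$p$ with trivial centralizer in $G/T$, so a Qian-type induction together with the inequality $|\Aut(S)|_p<\phi(1)_p^k$ for suitable $\phi\in\IBr_p(S)$ applies directly. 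The explicit constants $6.5=4.5+2$, $20=17+3$, $24=15+9$ arise by adding these two contributions.

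In your decomposition the principal gap is the treatment of the part of $G$ acting on $V=O_{p'}(G)$: you assert that a $p$-group acting faithfully and coprimely on $V$ has a large orbit on $\Irr(V)$, but Isaacs' lemma yields large orbits on the \emph{elements} of $V$, and the passage to $\Irr(V)$ is immediate only when $V$ is abelian; your $V$ need not even be solvable, since it can contain the $p'$-components of $G$. The paper avoids this precisely by working with abelian chief sections inside the solvable radical and climbing the Fitting series, and that iteration is where the bulk of the constant (namely the summand $15$ for $p\in\{2,3\}$) actually comes from --- a single coprime-orbit step on $O_{p'}(G)$ does not replace it. A second error is the claim that the $p$-part of $\prod_i\Out(S_i)$ is $O(\log\log|E|)$ and hence negligible: in general one only has $|\Out(S)|_p<|S|_p$, which is exactly why the paper's simple-group input is phrased as a bound on $|\Aut(S)|_p$ (absorbing $\Out(S)$) rather than on $|S|_p$. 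Finally, the specific constants $6.5$, $20$, $24$ are nowhere derived from your outline; ``optimizing the constants'' is not a proof step, and there is no indication your additive collection of layer, permutation, outer-automorphism, and $V$-action pieces would land on these values.
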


\bigskip

As conjugacy classes are closely related to the irreducible characters, we could study related questions on conjugacy class sizes. The conjugacy class size analogues of $p$-Brauer character degrees are obviously the class sizes of the $p$-regular elements.
Similarly to the situation for $p$-Brauer character degrees, it is also reasonable to expect that the $p$-parts of the conjugacy class sizes of the $p$-regular elements somehow restrict the structure of $P$.

Let $\overline{ecl}_p(G)$ be the largest integer such that $p^{\overline{ecl}_p(G)}$ divides some $|C| \in \clsize_{p'}(G)$; we will show that $|G:O_p(G)|_p$ is also bounded by a function of $\overline{ecl}_p(G)$.

\begin{thmB}
Let $G$ be a finite group and let $p$ be a prime; let $P \in \Syl_p(G)$. Let $\overline{ecl}_p(G)$ be the largest integer such that $p^{\overline{ecl}_p(G)}$ divides some $|C| \in \clsize_{p'}(G)$.

\begin{enumerate}
\item If $p \geq 5$, then $\log_p |G: O_p(G)|_p\leq 6.5 \,\overline{ecl}_p(G)$.
\item If $p=3$, then $\log_p |G: O_p(G)|_p\leq 19 \,\overline{ecl}_p(G)$.
\item If $p=2$, then $\log_p |G: O_p(G)|_p\leq 17 \,\overline{ecl}_p(G)$.
\end{enumerate}
\end{thmB}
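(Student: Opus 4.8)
The plan is to prove Theorem B by the same machinery used for Theorem A, with $p$-regular conjugacy class sizes playing the role of Brauer character degrees. The first step is to record the monotonicity properties that drive the induction. If $N \nor G$, then $\overline{ecl}_p(N) \le \overline{ecl}_p(G)$ because for $p$-regular $x \in N$ one has $|x^N| = |NC_G(x):C_G(x)|$ dividing $|x^G|$; and $\overline{ecl}_p(G/N) \le \overline{ecl}_p(G)$ because every $p$-regular element of $G/N$ lifts to a $p$-regular element $x$ of $G$ (take the $p'$-part of a preimage), and $C_G(x)$ lies in the full preimage of $C_{G/N}(xN)$, so $|(xN)^{G/N}|$ divides $|x^G|$. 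Also $\overline{ecl}_p(A\times B) = \overline{ecl}_p(A) + \overline{ecl}_p(B)$, since class sizes multiply across direct factors and the two sides can be optimised independently. Applying the quotient inequality with $N = O_p(G)$, and using $O_p(G/O_p(G)) = 1$ and $|G:O_p(G)|_p = |G/O_p(G)|_p$, it suffices to prove the three bounds under the hypothesis $O_p(G) = 1$.

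Next I would induct on $|G|$, with $G$ a minimal counterexample satisfying $O_p(G) = 1$. A standard argument reduces to the case that $G$ has a unique minimal normal subgroup $N$: if $M_1 \ne M_2$ were two of them, $G$ embeds in $G/M_1 \times G/M_2$, and combining the inductive bound on each factor with the additivity and quotient properties of $\overline{ecl}_p$ contradicts minimality. With $O_p(G) = 1$ and a unique minimal normal subgroup, $F^*(G)$ is either a $p'$-subgroup of the form $O_q(G)$ for a single prime $q \ne p$ --- the affine/solvable situation, in which $G/C_G(N)$ acts faithfully on an elementary abelian $q$-section --- or a direct product of isomorphic non-abelian simple groups. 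In both cases, controlling the permutation action of $G$ on the set of components or imprimitivity blocks reduces the problem to two base cases: almost simple groups $S \le G \le \Aut(S)$, and affine-type groups $G = V \rtimes H$ with $V = \FF_q^d$ and $H \le \GL(d,q)$ acting faithfully.

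In the affine base case with $p \nmid q$, every nontrivial $v \in V$ is $p$-regular and $C_G(v) = V \rtimes \Stab_H(v)$, so $|v^G|_p = |v^H|_p$ while $|G|_p = |H|_p$; hence it suffices to bound $\log_p|H|_p$ by a fixed multiple of $\max_{v} \log_p|v^H|_p$, which is precisely the Classification-based orbit estimate used for Theorem A. When $p \mid q$, translations are $p$-elements, so one instead produces a semisimple $p$-regular element of $G$ whose centralizer has small $p$-part; the almost simple case is handled likewise by exhibiting an explicit $p$-regular element from the known centralizer orders in the finite simple groups. Assembling the base cases and optimising yields the constants $6.5$, $19$, $17$; for $p \in \{2,3\}$ these differ from the constants in Theorem A because the extremal $p$-regular class-size estimates in the small-characteristic classical and affine cases are not the same as the corresponding Brauer-degree estimates.

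The main obstacle is the module case: showing that for a faithful coprime $\FF_q H$-module $V$ one has $\log_p|H|_p$ bounded by a universal constant times $\max_{v \in V}\log_p|v^H|_p$, with the constant small enough to give $6.5$, $19$, $17$. The extremal configurations --- imprimitive and tensor-decomposable modules, and small simple groups in small characteristic, where centralizers of vectors keep a large $p$-part --- are what force the detailed Classification input and pin down the numerical constants; keeping these estimates consistent with the defining-characteristic case, where a different supply of $p$-regular elements is required, is the other delicate point.
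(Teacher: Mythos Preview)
Your opening paragraph on monotonicity of $\overline{ecl}_p$ under normal subgroups and quotients is correct and matches the paper. The trouble begins at the next step. Your reduction to a unique minimal normal subgroup does not go through: from the embedding $G \hookrightarrow G/M_1 \times G/M_2$ you only get $|G|_p \le |G/M_1|_p \cdot |G/M_2|_p$, and the inductive hypothesis controls $|G/M_i : O_p(G/M_i)|_p$, not $|G/M_i|_p$; moreover the additivity $\overline{ecl}_p(A\times B)=\overline{ecl}_p(A)+\overline{ecl}_p(B)$ applies to the ambient direct product, not to its subgroup $G$, so there is no way to recombine the two factor bounds into the single inequality $|G|_p \le p^{kn}$. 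Even granting a unique minimal normal subgroup, the further claim that one lands in either an almost simple or a genuinely affine base case $G=V\rtimes H$ is too coarse: $O_q(G)$ need not be elementary abelian nor complemented, and nothing you wrote explains how a single orbit bound on one module yields the global constants $6.5$, $19$, $17$. (The digression about $p\mid q$ is also moot: with $O_p(G)=1$ a minimal normal abelian subgroup is automatically a $p'$-group.)

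The paper does not attempt a minimal-counterexample reduction at all. Instead it layers $G$ by its maximal normal $p$-solvable subgroup $T$ and bounds the two pieces separately. For $T$ (with $O_p(T)=O_p(G)$), the solvable case is handled for $p\ge 5$ via the existence of a $p$-block of defect at most $\frac35\log_p|T|_p$, giving the factor $2.5$, and for $p\in\{2,3\}$ via an orbit theorem forcing $\bC_G(x)\le \bF_8(G)$ together with Isaacs's coprime-action lemma on each Fitting layer, giving the factor $15$; for odd $p$ one then passes from solvable to $p$-solvable using an orbit theorem on $\Irr$ (resp.\ $\cl$) of a direct product of simples, contributing a further factor $2$. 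For $\bar G=G/T$ one has $\bC_{\bar G}(\bF^*(\bar G))=1$ with $\bF^*(\bar G)$ a direct product of non-abelian simple groups of order divisible by $p$, and a lemma on each simple factor produces a $p$-regular class $C$ with $|\Aut(S)|_p<|C|_p^2$; lifting via a Qian-type product argument gives $|\bar G|_p\le p^{2n}$. The constants $6.5=4.5+2$, $19=17+2$, $17=15+2$ arise from summing these layer contributions, not from a single affine or almost-simple extremal case.
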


We notice that recently Tong-Viet has done some related work in finding various conditions on Brauer character degrees for a finite group to have a normal Sylow $p$-subgroup (see ~\cite{HTV}).

\section{Notation and preliminary results} \label{sec:Orbits}
We first fix some notation:
\begin{enumerate}

\item We use $\bF(G)$ to denote the Fitting subgroup of $G$. Let $\bF_i(G)$ be the $i$th ascending Fitting subgroup of $G$, i.e., $\bF_0(G)=1$, $\bF_1(G)=\bF(G)$ and $\bF_{i+1}(G)/\bF_i(G)=\bF(G/\bF_i(G))$.

\item We use $\bF^*(G)$ to denote the generalized Fitting subgroup of $G$.

\item Let $p$ be a prime number, we say that an element $x \in G$ is $p$-regular if the order of $x$ is not a multiple of $p$.

\item We use $\clsize_{p'}(G)$ to denote the set of conjugacy class sizes of $p$-regular elements of $G$.

\item We use $\cl(G)$ to denote the set of all the conjugacy classes of $G$, and we use $\cl_{p'}(G)$ to denote the set of all the conjugacy classes of $p$-regular elements of $G$.

\item We denote $\cd(G)=\{\chi(1)\ | \ \chi \in \Irr(G) \}$.

\item We use $\IBr_p(G)$ to denote the set of all the irreducible $p$-Brauer characters of $G$.

\item Let $\bar e_p(G)$ be the largest integer such that $p^{\bar e_p(G)}$ divides $\chi(1)$ for some $\chi \in \IBr_p(G)$.

\item Let $\overline{ecl}_p(G)$ be the largest integer such that $p^{\overline{ecl}_p(G)}$ divides some $|C| \in \clsize_{p'}(G)$.

\item We use the notation $\dl(G)$ for the derived length of a solvable group $G$.

\item If a group $G$ acts on a set $\Omega$ and $\omega$ is an element in $\Omega$, we will use the notation $\bC_G(\omega)$ to denote the stabilizer of the element $\omega$ under the action of $G$. In particular, if $\lambda$ is an irreducible character of a normal subgroup $N$ of $G$, then $\bC_G(\lambda)$ denotes the inertia group of $\lambda$ in $G$. Let $\Omega_1$ be a subset of $\Omega$, we use $\Stab_G{\Omega_1}$ to denote the stabilizer of $\Omega_1$ under the action of $G$ as a set (consider the induced action of $G$ on $\mathcal{P}(\Omega)$, the power set of $\Omega$).
\end{enumerate}

\bigskip

We need the following results about simple groups.

\begin{lemma}\label{simplecoprime}
Let $A$ act faithfully and coprimely on a non-abelian simple group $S$. Then $A$ has at least $2$ regular orbits on $\Irr(S)$.
\end{lemma}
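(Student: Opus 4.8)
The plan is to reduce the statement to a question about a single automorphism-generated group acting on the character table of $S$, and then invoke the classification of finite simple groups together with known regular-orbit results. First I would set $A \leq \Aut(S)$ (identifying $A$ with its image, which is legitimate since the action is faithful), and recall that because the action is coprime, $\gcd(|A|,|S|)=1$; in particular $A$ is a $p'$-group for every prime $p$ dividing $|S|$, so $A$ is built from primes not dividing $|S|$. The key structural fact I would use is that $\Out(S)$ is quite small and restricted: for alternating groups it is elementary abelian of order at most $4$ (or $S_4$-free), and for groups of Lie type it is (a subgroup of) the product of diagonal, field, and graph automorphisms, so $|\Out(S)|$ is polynomially bounded in terms of the rank and is, crucially, "almost" a $\{2,3,\dots\}$-group whose structure is well understood. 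Since $A \leq \Out(S)$ must avoid all primes dividing $|S|$, and $|S|$ is divisible by every small prime in the relevant range, $A$ ends up being a cyclic group of field automorphisms (up to small exceptions), which drastically limits the cases.

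Next I would handle the generic case: when $A$ is cyclic generated by a field automorphism of order coprime to $|S|$, one counts $A$-orbits on $\Irr(S)$ via the formula $\#(\text{orbits}) = \frac{1}{|A|}\sum_{a \in A} |\Irr(S)^a| = \frac{1}{|A|}\sum_{a \in A} k(C_S(a))$, where $\Irr(S)^a$ has size equal to the number of $a$-fixed conjugacy classes, which for a field automorphism $a$ is the number of conjugacy classes of the fixed-point subgroup $C_S(a)$ (a smaller group of the same Lie type). A regular orbit contributes $|A|$ to $|\Irr(S)|$ and non-regular orbits contribute less; the number of elements of $S$ lying in proper fixed-point subgroups $C_S(a)$, $a \neq 1$, is small compared to $k(S)$, so one gets that the number of non-regular points on $\Irr(S)$ is at most $\sum_{1 \neq a} k(C_S(a))$, which is far smaller than $2|A|$ relative to $k(S)$. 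Concretely I would show $k(S) - \sum_{1 \neq a \in A} k(C_S(a)) \geq 2|A|$, which yields at least two regular orbits. This inequality is where explicit (but routine) estimates on the number of conjugacy classes of groups of Lie type enter, using that $k(\mathrm{G}(q)) \gg k(\mathrm{G}(q_0))$ when $q = q_0^r$ with $r > 1$.

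Then I would dispose of the remaining small or exceptional cases by direct inspection: the sporadic groups (finitely many, with $|\Out| \leq 2$, so $A = 1$ unless one checks the Tits group and similar — but $A=1$ forces nothing since we need $A$ nontrivial for the statement to be nonvacuous, and when $A=1$ the claim is that $S$ has at least two trivial orbits on $\Irr(S)$, i.e. $|\Irr(S)| \geq 2$, which is clear); the alternating groups with their small outer automorphism groups (here coprimality with $|A_n|$ — divisible by $2$ and $3$ for $n \geq 4$ — forces $A = 1$ in essentially all cases); and a short list of small groups of Lie type in small characteristic where the generic counting estimate is too weak, handled using known character tables (ATLAS) or GAP computation. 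The main obstacle I anticipate is making the orbit-counting inequality in the generic Lie type case fully rigorous and uniform — one needs lower bounds for $k(S)$ and matching upper bounds for $\sum k(C_S(a))$ that work across all families and ranks, and the genuinely delicate part is the low-rank groups (such as $\PSL_2(q)$, $\PSU_3(q)$, $\mathrm{Sz}(q)$, $\mathrm{Ree}$ groups) over small fields admitting a field automorphism, where the gap between $k(S)$ and $2|A| + \sum k(C_S(a))$ is narrowest and must be checked by hand.
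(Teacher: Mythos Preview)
The paper does not prove this lemma at all; its proof is a one-line citation to \cite[Proposition~2.6]{MOTIEP}. Your proposal instead sketches an actual proof, and the outline is sound and matches the argument underlying the cited result: coprimality plus faithfulness embed $A$ into $\Out(S)$ avoiding every prime divisor of $|S|$; via CFSG this forces $A=1$ for sporadic and alternating $S$ (since $2\mid |S|$ and $|\Out(S)|\le 4$), and for Lie-type $S$ it forces $A$ to be a cyclic group of field automorphisms; the regular-orbit count then comes from the inequality $k(S)\ge 2|A|+\sum_{1\ne a\in A} k(C_S(a))$, using that $|\Irr(S)^a|=k(C_S(a))$ for a coprime automorphism $a$ (Brauer's permutation lemma together with the coprime-action correspondence on classes). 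So you have correctly reconstructed the method that the paper merely invokes. Two minor remarks: the Burnside orbit-counting formula you wrote down is not what you actually use --- the relevant bound is on the number of non-regular \emph{points}, exactly as you then state; and the step you label ``routine'' (the uniform verification of $k(S)\ge 2|A|+\sum k(C_S(a))$ across all Lie types, plus the low-rank small-$q$ checks) is where essentially all the labour sits, which is why the paper outsources it rather than reproducing it.
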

\begin{proof}
This is ~\cite[Proposition 2.6]{MOTIEP}.
\end{proof}

\begin{lemma}\label{chardegsimple}
If $G$ is a non-abelian finite simple group, then $|\cd(G)| \geq 4$.
\end{lemma}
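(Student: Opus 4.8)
This result is a known consequence of the classification of finite simple groups — in fact it is part of the Malle--Moret\'o classification of nonsolvable groups with few character degrees — so one legitimate option is to quote that work directly. If instead one wants a self-contained argument, the fact that the bound is only $|\cd(G)|\geq 4$ makes a direct proof feasible, and I would organize it along the families supplied by the classification. (Lemma~\ref{simplecoprime} does not seem directly useful here: $\Out(S)$ need not act coprimely on $S$, and regular orbits of an automorphism group on $\Irr(S)$ do not by themselves produce distinct degrees.)

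The sporadic groups and the Tits group form a finite check: each has well more than four distinct irreducible character degrees, as one reads off from the ATLAS. For the alternating groups, $\Alt(5)$ and $\Alt(6)$ are handled by inspection --- e.g.\ $\cd(\Alt(5))=\{1,3,4,5\}$, so the bound is attained --- and for $n\geq 7$ one exhibits four degrees coming from $\Sym(n)$: the irreducible characters of $\Sym(n)$ labelled by $(n)$, $(n-1,1)$, $(n-2,2)$, $(n-2,1,1)$ have the pairwise distinct degrees $1,\ n-1,\ \tfrac{n(n-3)}{2},\ \tfrac{(n-1)(n-2)}{2}$, and since none of these four partitions is self-conjugate for $n\geq 7$, each restricts irreducibly to $\Alt(n)$ with unchanged degree; hence $|\cd(\Alt(n))|\geq 4$.

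The substantial case is the groups of Lie type $G$ over $\FF_q$. Two degrees come for free: $1$ from the trivial character and $q^N$ from the Steinberg character ($N$ the number of positive roots of the ambient root system, interpreted suitably in the twisted types), and $1<q^N$. So it suffices to produce two further degrees, distinct from $1$, from $q^N$, and from each other. For the rank-one families --- $\PSL_2(q)$, $\PSU_3(q)$, the Suzuki groups ${}^2B_2(q)$, and the Ree groups ${}^2G_2(q)$ --- this is read off the explicitly known character degree lists; for instance $\cd(\PSL_2(q))=\{1,q-1,q,q+1\}$ when $q$ is even, with additional degrees $\tfrac{q-1}{2}$ or $\tfrac{q+1}{2}$ present when $q$ is odd. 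For the remaining groups, of rank $\geq 2$, one uses the unipotent characters in the principal series, parametrized by $\Irr(W)$ with $W$ the Weyl group (or its relative analogue for twisted types): their generic degrees are essentially monic polynomials in $q$, and when $W$ has at least four irreducible characters --- which holds for every type of rank $\geq 2$ except $A_2$ and ${}^2A_2$ --- the corresponding specialized degrees together with $1$ give at least four distinct values. In types $A_2$ and ${}^2A_2$ (that is, $\PSL_3(q)$ and $\PSU_3(q)$, the latter already treated) a single further semisimple character supplies a fourth degree, and the low-rank exceptional groups $G_2(q)$, ${}^3D_4(q)$, ${}^2F_4(q)$ are checked individually from their known degree lists.

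The main obstacle is exactly this last step: making the Lie-type analysis uniform and airtight --- dispatching the finitely many small-rank groups and, for the rest, ruling out accidental coincidences among the generic degrees after specialization at small $q$. This is precisely what the simple-group portion of the Malle--Moret\'o classification establishes, so in practice the cleanest route is to cite that result, with the case division above serving only to make the reduction transparent.
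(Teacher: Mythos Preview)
Your case-by-case analysis via the classification is correct in outline, but it is far heavier than what is needed, and it is not the route taken in the paper. The paper simply invokes Theorem~12.15 of Isaacs' \emph{Character Theory of Finite Groups}, which asserts that any finite group $G$ with $|\cd(G)|\leq 3$ is solvable. Since a non-abelian simple group is nonsolvable, the lemma follows in one line. Crucially, Isaacs' theorem is a classical result that does \emph{not} rely on the classification of finite simple groups, whereas your argument (and the Malle--Moret\'o reference you suggest) does. So while your approach succeeds and even yields explicit witnesses for the four degrees in each family, it trades a CFSG-free one-line citation for a CFSG-dependent multi-page verification; the paper's route is both shorter and logically lighter.
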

\begin{proof}
This follows from ~\cite[Theorem 12.15]{Isaacs/book}.
\end{proof}

\begin{lemma}\label{classsimple}
If $G$ is a non-abelian finite simple group, then $|cs(G)| \geq 4$.
\end{lemma}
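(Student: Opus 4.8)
The statement is the conjugacy-class-size counterpart of Lemma~\ref{chardegsimple}, so I would follow a parallel strategy: either invoke a known classification of finite groups with few conjugacy class sizes, or carry out a case analysis via the Classification of Finite Simple Groups. A useful preliminary reduction trims the small cases abstractly. Writing $cs(G)$ for the set of conjugacy class sizes of $G$, Itô's theorem shows that $|cs(G)| \le 2$ forces $G/Z(G)$ to be a $p$-group, so no non-abelian simple group can have fewer than three class sizes; and the classification of groups with exactly three conjugacy class sizes (all of which are solvable — known in the literature, or checked directly against the list of small simple groups) rules out $|cs(G)| = 3$. Hence it suffices to exhibit four pairwise distinct conjugacy class sizes in every non-abelian finite simple group $G$.

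For $G = A_n$ with $n \ge 5$ this is a short explicit computation: the identity ($1$), a $3$-cycle, a double transposition, and a single long cycle (an $n$-cycle when $n$ is odd, an $(n-1)$-cycle fixing a point when $n$ is even) have class sizes of orders of magnitude $1$, $\sim n^3$, $\sim n^4$, and $\sim (n-1)!$ respectively, which are pairwise distinct for all $n \ge 5$ after a direct check of the handful of small values. For the $26$ sporadic groups and the Tits group one simply reads off the ATLAS, where each group has far more than four distinct class sizes. For $G$ of Lie type over $\FF_q$ of semisimple rank $r$, I would use the identity (centralizer $G$), a regular unipotent element (centralizer of order $\approx q^{r}|Z(G)|$), a regular semisimple element in a maximal torus (centralizer order a product of values of cyclotomic polynomials in $q$), and a non-regular semisimple element whose centralizer is a proper positive-dimensional Levi subgroup (centralizer order $\approx q^{r + c}$ for some $c \ge 1$); as polynomials in $q$ these four orders are pairwise distinct, so they give four distinct class sizes for all but finitely many pairs (type, $q$), the remaining cases being settled individually (for instance, in $\mathrm{PSL}_2(q)$ the split-torus, non-split-torus and unipotent classes together with the identity already supply four sizes, and likewise for $\mathrm{Sz}(q)$ and the other small rank-$1$ families).

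The main obstacle is the uniformity of the Lie-type case: one wants a single argument valid across all Lie types and ranks with only an explicitly bounded list of small exceptions, which requires some care with the generic-order polynomials and with the groups where $q$ is tiny. I would isolate this bookkeeping as a separate sublemma, phrased in terms of the order polynomial $|G(q)|$ and the centralizer orders of regular/subregular elements. An alternative that sidesteps most of the work — and is in the same spirit as the proof of Lemma~\ref{chardegsimple}, which merely cites Isaacs's book — is to quote directly a result in the literature giving a lower bound of $4$ for the number of conjugacy class sizes of a non-solvable (equivalently, of a non-abelian simple) group, should a convenient reference be available.
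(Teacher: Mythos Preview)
Your proposal is correct, and you even identify the paper's actual approach as your closing alternative: the paper's entire proof is the single sentence ``This follows from \cite{Ito}'', citing It\^o's 1970 paper \emph{On finite groups with given conjugate types.~II}, which shows that a finite group with at most three distinct conjugacy class sizes is solvable. Thus the preliminary reduction you sketch in your first paragraph (It\^o for $|cs(G)|\le 2$, plus solvability of groups with $|cs(G)|=3$) \emph{is} the whole proof; the CFSG case analysis that occupies the bulk of your proposal is unnecessary. Your long route would in principle yield a self-contained argument independent of the classification of groups with few class sizes, at the cost of a nontrivial Lie-type bookkeeping sublemma you acknowledge would need care; the paper's route trades that work for a literature citation, which is entirely appropriate here since the lemma is only a tool.
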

\begin{proof}
This follows from ~\cite{Ito}.
\end{proof}

The main results are proved using an orbit theorem for $p$-solvable groups. This method provides a unified approach to the Brauer character degree and the $p$-regular class size version of the problem.

We now state the orbit theorem for $p$-solvable groups. This result has been proved in ~\cite{YY25} but the proof there has some glitch; we take the opportunity to provide a corrected proof here.

\begin{theorem} \label{charorbit}
Let $V\trianglelefteq G$, where $G/V$ is $p$-solvable for an odd prime $p$,
and  $V$ is a direct product of isomorphic non-abelian simple groups $S_1, \ldots, S_n$.
Suppose that $G$ acts transitively on the groups $S_1,\ldots,S_n$, and write $O=\bigcap_k N_G(S_k)$.
Then there exist nonprincipal $v_1$, $v_2$ and  $v_3  \in {\rm Irr}(V)$
of different degrees such that all Sylow $p$-subgroups of $\bC_G(v_j)$
are contained in $O$ for all $j=1,2,3$.
\end{theorem}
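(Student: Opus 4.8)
The plan is to reduce the statement about $G$ to a statement about the finite group $\bar G = G/V$ acting on the set $\Irr(V)$ and, more importantly, on the set of $n$ simple tensor factors. Since $V = S_1 \times \cdots \times S_n$ with each $S_i \cong S$, we have $\Irr(V) = \{\lambda_1 \times \cdots \times \lambda_n : \lambda_i \in \Irr(S_i)\}$, and the degree of such a character is $\prod_i \lambda_i(1)$. The group $G$ permutes the $S_k$ transitively, so $G/O$ embeds in $\Sym(n)$, and $O = \bigcap_k N_G(S_k)$ acts on each $S_k$; write $A_k$ for the image of $O$ in $\Aut(S_k)$. The key observation is that the condition ``all Sylow $p$-subgroups of $\bC_G(v)$ lie in $O$'' is equivalent to saying that the image of $\bC_G(v)$ in $G/O \le \Sym(n)$ is a $p'$-group, i.e. $v$ is moved nontrivially by every $p$-element of $G$ outside $O$. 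So I want to produce three nonprincipal characters $v_1, v_2, v_3 \in \Irr(V)$ of distinct degrees whose stabilizers induce $p'$-subgroups of $\Sym(n)$.

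First I would handle the action on the factors. Since $G/O$ is $p$-solvable (as a section of $G/V$) and acts transitively on $n$ points, and since $p$ is odd, I can invoke a result guaranteeing that a transitive $p$-solvable permutation group of odd characteristic has a regular orbit, or more precisely an orbit on subsets on which the induced action has trivial (or $p'$) point stabilizers; the standard tool here is the theory of $p$-solvable permutation groups and the existence of regular orbits on the power set — this is where the ``$p$-solvable'' and ``$p$ odd'' hypotheses are really used. Concretely: by a theorem on coprime/$p$-solvable actions, there is a subset $\Delta \subseteq \{1, \ldots, n\}$ whose setwise stabilizer in $G/O$ is a $p'$-group. Then I choose a nonprincipal $\mu \in \Irr(S)$ and, transporting it appropriately, build a character of $V$ supported (nonprincipally) exactly on the factors indexed by $\Delta$; its $\Sym(n)$-stabilizer then lies inside $\Stab(\Delta)$, hence induces a $p'$-group in $G/O$.

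To get three characters of \emph{distinct degrees} I would combine this with Lemma~\ref{chardegsimple}, which gives $|\cd(S)| \ge 4$: pick nonprincipal $\mu^{(1)}, \mu^{(2)}, \mu^{(3)} \in \Irr(S)$ with $1 < \mu^{(1)}(1) < \mu^{(2)}(1) < \mu^{(3)}(1)$, and form $v_j$ by placing $\mu^{(j)}$ on the factors in $\Delta$ (and the principal character elsewhere), so $v_j(1) = \mu^{(j)}(1)^{|\Delta|}$, and these three degrees are distinct. One still needs that the full stabilizer $\bC_G(v_j)$ — not just its image in $\Sym(n)$ — has its Sylow $p$-subgroups in $O$; but if $g \in \bC_G(v_j)$ has $p$-power order and $gO$ is nontrivial in $G/O$, then $g$ permutes the $\Delta$-factors nontrivially while fixing $v_j$, and since $\mu^{(j)}$ is nonprincipal this is impossible once $\Stab_{G/O}(\Delta)$ is a $p'$-group and the labels on $\Delta$ are all equal to the \emph{same} $\mu^{(j)}$ (so permutations within $\Delta$ do fix $v_j$, but those have $p'$-order). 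A subtle point to check is that the $O$-action may permute the $S_k$'s identifications, so I should fix once and for all isomorphisms $S_k \cong S$ and track how $O$ twists $\mu^{(j)}$; choosing $\Delta$ to be a single $O$-orbit union, or refining $\Delta$ so that $O$ acts on the labels coherently, resolves this.

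The main obstacle I anticipate is precisely the interplay between the permutation action on $\{1,\dots,n\}$ and the ``internal'' automorphisms coming from $O$: I need a subset $\Delta$ and a choice of nonprincipal character of $S$ that is simultaneously (i) stable enough under $O$ that I can define a genuine character $v_j$ of $V$, and (ii) rigid enough that only $p'$-elements outside $O$ can fix it. Getting a single $\Delta$ that works for all three degrees — rather than three different subsets — is what makes the ``distinct degrees'' conclusion clean, and it is the place where a careful regular-orbit argument for the $p$-solvable group $G/O$ on $\mathcal{P}(\{1,\dots,n\})$ is needed. If a common $\Delta$ cannot be found directly, the fallback is to allow the three subsets $\Delta_1, \Delta_2, \Delta_3$ to differ but arrange $|\Delta_1|, |\Delta_2|, |\Delta_3|$ and the chosen simple-group degrees so that the three products $\mu^{(j)}(1)^{|\Delta_j|}$ remain pairwise distinct, again using $|\cd(S)| \ge 4$ to have enough room.
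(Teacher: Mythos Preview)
Your overall shape is right—reduce to the permutation action of $G/O$ on $\{1,\dots,n\}$ and build the $v_j$ from a subset $\Delta$ together with Lemma~\ref{chardegsimple}—and once you have such a $\Delta$ the deduction $\bC_G(v_j)\le \Stab_G(\Delta)$ (from the nonprincipal/principal dichotomy) and hence ``Sylow $p$-subgroups of $\bC_G(v_j)$ lie in $O$'' is clean. The worry you flag about the $O$-action twisting the identifications $S_k\cong S$ is a red herring: any such twisting can only make $\bC_G(v_j)$ \emph{smaller}, which is harmless.

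The genuine gap is the existence of $\Delta$. You assert that a transitive $p$-solvable group $G/O$ (for odd $p$) admits a subset with $p'$ setwise stabilizer, citing an unspecified ``regular-orbit theorem''. No such off-the-shelf result applies to arbitrary transitive (as opposed to primitive) $p$-solvable groups, and this is exactly where all the work sits. The paper does \emph{not} try to find such a $\Delta$ directly. Instead it inducts through a block system: one chooses a system of maximal blocks $\Delta_1,\dots,\Delta_m$, applies the theorem inductively to each $J_i=\Stab_G(\Delta_i)$ acting on $V_i=\prod_{S_t\in\Delta_i}S_t$ to obtain three nonprincipal $\theta_i,\lambda_i,\chi_i\in\Irr(V_i)$ of distinct degrees with the Sylow-$p$-in-$O_i$ property, and only then uses the \emph{primitive} action of $G/K$ on the set of blocks. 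At that primitive level Gluck (solvable case) and Seress (nonsolvable case) give a partition of the block set into two pieces with $p'$ joint stabilizer, except for a short explicit list of degree~$8$ and~$9$ actions, which are dispatched by using a partition into \emph{three} pieces and labelling the pieces with the three available characters $\theta_i,\lambda_i,\chi_i$. The induction is essential because a $p$-element of $G/O$ can live entirely inside the kernel $K$ of the block action, and your single subset $\Delta$ (which, if taken as a union of blocks, has stabilizer containing all of $K$) cannot see it; one has to control the $p$-elements block by block.

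So the approach is not wrong in spirit, but the sentence ``there is a subset $\Delta$ with $p'$-stabilizer'' hides the entire argument. If you try to prove that sentence you will be led to the same block-system induction the paper carries out; the availability of \emph{three} characters per block (rather than a single $\mu^{(j)}$) is precisely what lets the induction close in the exceptional primitive cases.
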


\begin{proof}

Clearly $O$ is normal in $G$ and
$G$ is  a transitive permutation group
on the set $\{S_1, \ldots, S_n\}$ with kernel $O$.
If $n=1$, then the required result follows by Lemma ~\ref{chardegsimple}.
Thus we may assume that $n >1$.
Let  $(\Delta_1, \dots, \Delta_m)$ be
a system of imprimitivity of  $G$  with maximal block-size $b$.
Then $(\Delta_1, \dots, \Delta_m)$ is a partition of $\{S_1, \ldots, S_n\}$ and each block
$\Delta_i$ has size $b$. Thus  $$1 \leq b <n; bm=n, m\geq 2.$$
Let $\Omega=\{\Delta_1, \ldots, \Delta_m\}$.
Then $G$ is a primitive permutation group of degree $m$ on the set $\Omega$.
Set
 $$J_i={\rm Stab}_{G}(\Delta_i),\,\, K =\bigcap_{1\leq i\leq m}  J_i,\,\,
 V_i=\prod_{S_t \in \Delta_i} S_t, i=1, \ldots,m.$$
Observe that $$J_i=N_G(V_i),$$
the groups $J_i$ are permutationally equivalent transitive groups of degree $b$,
and that  $K$ is a normal subgroup of $G$  and stabilizes
each of the blocks $\Delta_i$.
In particular,  $G/K$ is a primitive group
of degree $m$ acting upon the set $\Omega$.

Let us consider $\sigma\in \Irr(V_i)$.
We may view $\sigma$ as a character of $V$.
Note that if $\sigma$ is nonprincipal,
then
$\bC_G(\sigma)\leq J_i$ because $G$ acts transitively on $\Omega$,
and therefore $\bC_G(\sigma)=\bC_{J_i}(\sigma)$.

Let us consider $J_i$ and the action of $J_i$
on ${\rm Irr}(V_i)=\prod_{S_t \in \Delta_i} {\rm Irr}(S_t)$.
Since $G$ acts transitively on $\{S_1, \ldots, S_n\}$ and acts transitively on $\Omega$,
we see that  $J_i$ acts transitively on $\Delta_i$.
Write $$O_i=\bigcap_{t\in \Delta_i} N_{J_i}(S_t), i=1,\ldots, m.$$
Clearly $O=\bigcap_{i=1}^m O_i$.
Note that if $S_t\in \Delta_i$,
then $N_G(S_t)\leq J_i$ because $G$ acts transitively on $\Omega$.
Therefore $N_G(S_t)=N_{J_i}(S_t)$,
and this implies that $$O_i= \bigcap_{S_t\in \Delta_i} N_{G}(S_t).$$
Since $J_i< G$, by induction there exist nonprincipal $\theta_i, \lambda_i$,
and $\chi_i \in {\rm Irr}(V_i)$ of different degrees
such that all Sylow $p$-subgroups of $\bC_{J_i}(\theta_i), \bC_{J_i}(\lambda_i)$ and
$\bC_{J_i}(\chi_i)$ are contained in $O_i$, that is, all Sylow  $p$-subgroups of $\bC_{G}(\theta_i), \bC_{G}(\lambda_i)$,
$\bC_{G}(\chi_i)$  are contained in $O_i$.
Clearly we may choose $\theta_i$, $1 \leq i \leq m$, to be $G$-conjugate,
and we can do the same for $\lambda_i$ and $\chi_i$.
We may assume that $\theta_i(1)> \lambda_i(1)> \chi_i(1)$.

\medskip

We claim that there exist proper subsets $\Omega_1$ and $\Omega_2$ of $\Omega$ such that
$\Omega=\Omega_1\cup \Omega_2$, $\Omega_1\cap \Omega_2=\emptyset$,
and ${\rm Stab}_{G/K}(\Omega_1) \cap {\rm Stab}_{G/K}(\Omega_2)$ is a $p'$-group except for a few cases listed below.

(1) $|\Omega|=8$, $G/K \cong \text{A}\Gamma\text{L}(1,8)$.

(2) $|\Omega|=9$, $G/K \cong \text{AGL}(2,3)$ or $G/K \cong \text{ASL}(2,3)$.

To see the claim, we need to investigate
the action of $G/K$ on the power set $\mathcal{P}(\Omega)$ of $\Omega$.
Clearly we may assume that $p$ divides $|G/K|$.
Note that if $m\geq 5$, then ${\rm Alt}(m) \not \leq G/ K$
because  $G/K$ is $p$-solvable.
Note that if  $G/K$ has a regular orbit on $\mathcal{P}(\Omega)$,
then  there exists a (clearly proper) subset $\Omega_1$ of $\Omega$ such that
${\rm Stab}_{G/K}(\Omega_1)=1$,
thus $\Omega_1$ and $\Omega_2=\Omega-\Omega_1$ meet our requirement.
Hence we may assume $G$ has no regular orbit on $\mathcal{P}(\Omega)$.

Suppose that $G/K$ is solvable. By Gluck's result about solvable primitive permutations groups \cite{GluckPerm}, we see that there exists a partition $\Omega_1, \Omega_2$ of $\Omega$
such that ${\rm Stab}_{G/K}(\Omega_1)\cap {\rm Stab}_{G/K}(\Omega_2)$ is a $2$-group, except for the following cases:

(1) $n=8$, $G/K \cong \text{A}\Gamma\text{L}(1,8)$.

(2) $n=9$, $G/K \cong \text{AGL}(2,3)$ or $G/K \cong \text{ASL}(2,3)$.

Suppose that $G/K$ is nonsolvable.
By ~\cite[Theorem 2]{Seress}, $G/K$ is not $r$-solvable for any prime divisor $r$ of $|G/K|$,
we get a contradiction.

We first assume that there exist proper subsets $\Omega_1$ and $\Omega_2$ of $\Omega$ such that
$\Omega=\Omega_1\cup \Omega_2$, $\Omega_1\cap \Omega_2=\emptyset$,
and ${\rm Stab}_{G/K}(\Omega_1) \cap {\rm Stab}_{G/K}(\Omega_2)$ is a $p'$-group.

Assume that $\Omega_1=\{\Delta_1,\ldots, \Delta_s\}$,
$\Omega_2=\{\Delta_{s+1},  \ldots, \Delta_m\}$.
Set
$$v_1= \prod_{i=1}^{s} \theta_i \cdot \prod_{i=s+1}^{m} \lambda_i,\,\,
v_2= \prod_{i=1}^{s} \theta_i \cdot \prod_{i=s+1}^{m} \chi_i,\,\,
v_3= \prod_{i=1}^{s} \lambda_i \cdot \prod_{i=s+1}^{m}\chi_i.$$
Clearly, $v_1, v_2$ and $v_3$ have different degrees.
Let us investigate $\bC_G(v_1)$ and its Sylow $p$-subgroup $P$.
Since $G$ acts transitively on $\Omega$ and thus on $\{V_1, \ldots, V_m\}$,
we see that $\bC_G(v_1)\leq {\rm Stab}_G(\Omega_1)\cap {\rm Stab}_G(\Omega_2)$.
As $({\rm Stab}_G(\Omega_1)\cap {\rm Stab}_G(\Omega_2))/K$ is a $p'$-group by the claim,
it forces that
$$P\leq K\cap \bC_G(v_1)\cap P=\bC_K(v_1)\cap P.$$
Observing that all groups $V_i$ are normal in $K$, we have
$$\bC_K(v_1)=(\bigcap_{ i=1}^s \bC_K(\theta_i))\cap (\bigcap_{i=s+1}^m \bC_K(\lambda_i)).$$
We get the required result  that
$$P\leq (\bigcap_{ i=1}^s (\bC_K(\theta_i)\cap P))\cap (\bigcap_{i=s+1}^m (\bC_K(\lambda_i)\cap P))\leq
\bigcap_{i=1}^m O_i=O.$$
Similarly all Sylow $p$-subgroups of $\bC_G(v_2)$ and $\bC_G(v_3)$ are contained in $O$.

We next assume that $n=8$, and $G/K \cong$ A$\Gamma$L$(1,8)$. We set $\Omega_1=\{1,2,3\}$, $\Omega_2=\{4,5,6\}$, and $\Omega_3=\{7,8\}$. We see that ${\rm Stab}_{G/K}(\Omega_1) \cap {\rm Stab}_{G/K}(\Omega_2) \cap {\rm Stab}_{G/K}(\Omega_3)=1$.

Set
$$v_1= \prod_{i \in \Omega_1} \theta_i \cdot \prod_{i \in \Omega_2} \lambda_i \cdot \prod_{i \in \Omega_3} \chi_i,\,\,
v_2= \prod_{i \in \Omega_1} \theta_i \cdot \prod_{i \in \Omega_2} \chi_i \cdot \prod_{i \in \Omega_3} \lambda_i,\,\,
v_3= \prod_{i \in \Omega_1} \lambda_i \cdot \prod_{i \in \Omega_2} \chi_i \cdot \prod_{i \in \Omega_3} \theta_i.$$
Clearly, $v_1, v_2$ and $v_3$ have different degrees. Let us investigate $\bC_G(v_1)$ and its Sylow $p$-subgroup $P$.
Since $G$ acts transitively on $\Omega$ and thus on $\{V_1, \ldots, V_m\}$,
we see that $\bC_G(v_1)\leq {\rm Stab}_G(\Omega_1)\cap {\rm Stab}_G(\Omega_2) \cap {\rm Stab}_G(\Omega_3)$.
As $({\rm Stab}_G(\Omega_1)\cap {\rm Stab}_G(\Omega_2) \cap {\rm Stab}_G(\Omega_3))/K$ is a trivial group,
it forces that
$$P\leq K\cap \bC_G(v_1)\cap P=\bC_K(v_1)\cap P.$$
Observing that all groups $V_i$ are normal in $K$, we have
$$\bC_K(v_1)=(\bigcap_{i \in \Omega_1} \bC_K(\theta_i))\cap (\bigcap_{i \in \Omega_2} \bC_K(\lambda_i)) \cap (\bigcap_{i \in \Omega_3} \bC_K(\chi_i)).$$
We get the required result  that
$$P\leq (\bigcap_{i \in \Omega_1} (\bC_K(\theta_i)\cap P))\cap (\bigcap_{i \in \Omega_2} (\bC_K(\lambda_i)\cap P) \cap (\bigcap_{i \in \Omega_3} (\bC_K(\chi_i)\cap P))\leq
\bigcap_{i=1}^m O_i=O.$$
Similarly all Sylow $p$-subgroups of $\bC_G(v_2)$ and $\bC_G(v_3)$ are contained in $O$.

We finally assume that $n=9$, and $G/K \cong AGL(2,3)$ or $G/K \cong ASL(2,3)$. We set $\Omega_1=\{1,2,3,4\}$, $\Omega_2=\{5,6,7\}$, and $\Omega_3=\{8,9\}$. We see that ${\rm Stab}_{G/K}(\Omega_1) \cap {\rm Stab}_{G/K}(\Omega_2) \cap {\rm Stab}_{G/K}(\Omega_3)$ is a $2$-group.

Set
$$v_1= \prod_{i \in \Omega_1} \theta_i \cdot \prod_{i \in \Omega_2} \lambda_i \cdot \prod_{i \in \Omega_3} \chi_i,\,\,
v_2= \prod_{i \in \Omega_1} \lambda_i \cdot \prod_{i \in \Omega_2} \theta_i \cdot \prod_{i \in \Omega_3} \chi_i,\,\,
v_3= \prod_{i \in \Omega_1} \lambda_i \cdot \prod_{i \in \Omega_2} \chi_i \cdot \prod_{i \in \Omega_3} \theta_i.$$
Clearly, $v_1, v_2$ and $v_3$ have different degrees. Let us investigate $\bC_G(v_1)$ and its Sylow $p$-subgroup $P$.
Since $G$ acts transitively on $\Omega$ and thus on $\{V_1, \ldots, V_m\}$,
we see that $\bC_G(v_1)\leq {\rm Stab}_G(\Omega_1)\cap {\rm Stab}_G(\Omega_2) \cap {\rm Stab}_G(\Omega_3)$.
As $({\rm Stab}_G(\Omega_1)\cap {\rm Stab}_G(\Omega_2) \cap {\rm Stab}_G(\Omega_3))/K$ is a $2$-group,
it forces that
$$P\leq K\cap \bC_G(v_1)\cap P=\bC_K(v_1)\cap P.$$
Observe that all $V_i$s are normal in $K$, we have
$$\bC_K(v_1)=(\bigcap_{i \in \Omega_1} \bC_K(\theta_i))\cap (\bigcap_{i \in \Omega_2} \bC_K(\lambda_i)) \cap (\bigcap_{i \in \Omega_3} \bC_K(\chi_i)).$$
We get the required result  that
$$P\leq (\bigcap_{i \in \Omega_1} (\bC_K(\theta_i)\cap P))\cap (\bigcap_{i \in \Omega_2} (\bC_K(\lambda_i)\cap P) \cap (\bigcap_{i \in \Omega_3} (\bC_K(\chi_i)\cap P))\leq
\bigcap_{i=1}^m O_i=O.$$
Similarly all Sylow $p$-subgroups of $\bC_G(v_2)$ and $\bC_G(v_3)$ are contained in $O$.
\end{proof}

\section{On $p$-parts of $p$-Brauer character degrees} \label{Char p part of G/F(G)}

It is a fundamental fact in block theory that if an ordinary irreducible character $\chi$ is such that $\chi(1)_p=|G|_p$, for a prime $p$, then its reduction modulo $p$ gives an irreducible Brauer character of the same degree. Hence then $e_p(G)\le \bar e_p(G)$, and the bounds obtained with respect to ordinary characters still hold in the case of $p$-Brauer characters.

For the solvable case, the problems in this paper have been studied in ~\cite{MO1} and certain bounds were obtained; more explicitly, it was shown that for a finite solvable group $G$ with $O_p(G)=1$, $\log_p |G|_p \leq 96 \bar e_p(G)$ and $\log_p |G|_p) \leq 683 \overline{ecl}_p(G)$.
We greatly improve those bounds, and we will obtain corresponding results for arbitrary finite groups.

We first note that if $N$ is a normal subgroup of $G$, then it is easy to see that $\bar e_p (G/N) \leq \bar e_p(G)$ and $\bar e_p (N) \leq \bar e_p(G)$. We shall use this fact freely in the following arguments.

\smallskip

The following lemma is due to Martin Isaacs ~\cite{IMI2}.

\begin{lemma}\label{coprimeaction}
Let $P$ be a nontrivial $p$-group that acts faithfully on a group $H$, where $|H|$ is not divisible by $p$. Then there exists an element $x \in H$ such that $|\bC_P (x)| \leq  |P|^{1/2}$.
\end{lemma}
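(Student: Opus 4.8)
The plan is to reduce to the case where $H$ is elementary abelian by passing to a suitable section, and then to invoke a counting argument on the orbits of $P$ acting on $H$. First I would note that since $|H|$ is coprime to $p$ and $P$ acts faithfully on $H$, coprime action theory applies: $P$ acts faithfully on $H/\Phi(H)$ if $H$ is a $q$-group, and more generally there is a $P$-invariant section of $H$ on which $P$ acts faithfully and which is a direct product of elementary abelian groups (for various primes $q \neq p$). Since the inequality $|\bC_P(x)| \leq |P|^{1/2}$ for an element $x$ in a section lifts to an element of $H$ (a preimage $\tilde x$ has $\bC_P(\tilde x) \leq \bC_P(x)$ after projecting), it suffices to treat the case where $H$ is a direct product of elementary abelian $q$-groups, and then — choosing the $q$-component on which $P$ still acts nontrivially and restricting — the case where $H = V$ is a faithful $\FF_q P$-module.

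The core step is then the following: if $V$ is a faithful $\FF_q P$-module with $P$ a nontrivial $p$-group ($q \neq p$), then some $v \in V$ has $|\bC_P(v)| \leq |P|^{1/2}$. I would prove this by a double-counting / averaging argument over the fixed-point spaces. For each subgroup $1 \neq Q \leq P$, the fixed-point set $\bC_V(Q)$ is a proper subspace of $V$ (by faithfulness, at least for $Q$ not contained in the kernel of the action — but the action is faithful so every nontrivial $Q$ has $\bC_V(Q) \subsetneq V$), hence $|\bC_V(Q)| \leq |V|/q \leq |V|^{1/2}$ when $\dim \bC_V(Q) \leq \tfrac12 \dim V$; the genuinely delicate point is handling subgroups $Q$ whose fixed space is large. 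The standard way around this is to use the stronger fact from coprime representation theory: for $V$ a faithful $\FF_q P$-module, one has $\sum_{1\neq Q \leq P}$-type estimates, or more efficiently, one picks a chief series of $P$ and a generating set and uses that the "bad" set $\{v : |\bC_P(v)| > |P|^{1/2}\}$ is contained in a union of fixed spaces of subgroups of index $< |P|^{1/2}$ in $P$, each of which is proper in $V$, and bounds the size of this union against $|V|$.

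The main obstacle is exactly this last covering estimate: showing $\bigcup \bC_V(Q)$, over the (possibly many) subgroups $Q$ with $[P:Q] < |P|^{1/2}$, does not exhaust $V$. The clean approach, which I expect the author's proof (Isaacs) to take, is to induct on $|P|$: pick a maximal subgroup $M \nor P$, so $[P:M] = p$. If $M$ acts faithfully on $H$, apply induction to get $x$ with $|\bC_M(x)| \leq |M|^{1/2} = |P|^{1/2}/p^{1/2}$; then $|\bC_P(x)| \leq p\,|\bC_M(x)| \leq p^{1/2}|P|^{1/2}$, which is not quite good enough, so one must instead average over an $M$-orbit or use that not all $p$ cosets of $M$ can fix $x$ simultaneously unless $x \in \bC_V(P)$. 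If $M$ does not act faithfully, then $\Ker$ of $M$ on $H$ is a nontrivial subgroup normal in $P$ with trivial intersection with... — this forces $P$ to be cyclic or a specific small configuration, handled directly. I would organize the induction around a maximal subgroup together with the orbit-averaging bound $\tfrac{1}{|V|}\sum_{v \in V}|\bC_P(v)| = (\text{number of } P\text{-orbits on } V)$, comparing this average to $|P|^{1/2}$; since a faithful module has at most $|V|/|P| + (\text{contributions from fixed points of proper subgroups})$ orbits, the average is controlled, and at least one $v$ beats the average. The finicky part is making the fixed-point contributions small enough, which is where the coprimeness and faithfulness are used in full force.
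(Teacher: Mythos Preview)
The paper does not prove this lemma; it simply cites Isaacs \cite{IMI2}. So there is nothing in the paper to compare your argument against, and your proposal is an independent attempt.

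Your outline has the right architecture --- reduce to a faithful $\FF_q P$-module and then argue by counting or induction --- and this is broadly how Isaacs proceeds. Two remarks. First, the reduction step is correct but, for non-solvable $H$, needs one more layer than you give: the $P$-chief factors of $H$ may be products of non-abelian simple $p'$-groups, and to reach elementary abelian sections while preserving faithfulness one should pass further to $P$-invariant Sylow subgroups of these factors (a coprime automorphism centralizing every invariant Sylow is trivial) and then to their Frattini quotients. In the paper's applications $H$ is already elementary abelian, so this subtlety never actually arises. Second, and more substantially, you correctly identify but do not close the gap in the linear case: the naive induction through a maximal subgroup $M \nor P$ loses a factor of $p^{1/2}$, as you note, and the covering or averaging bounds you describe require a genuine estimate that you leave as ``the finicky part''. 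That is exactly where the work in \cite{IMI2} lies, and your sketch stops short of it. So the proposal is a reasonable roadmap, not yet a proof.
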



\subsection{The solvable case}

\begin{theorem} \label{solvableepboundp5}
Let $G$ be a finite solvable group with $O_p(G)=1$, where $p \geq 5$;
set $n=\bar e_p(G)$. Then $|G|_p \leq p^{2.5n}$.
\end{theorem}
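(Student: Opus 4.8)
The plan is to induct on $|G|$ and reduce everything to understanding the action of $G$ on a suitable chief factor, using the orbit-theoretic machinery already in place for the $p$-solvable case. Write $n = \bar e_p(G)$ and assume $O_p(G) = 1$. Since $\bar e_p$ is monotone under quotients and normal subgroups, I would first dispose of easy reductions: if $G$ has a proper normal subgroup $N$ with $O_p(N) = 1$ and the quotient well-behaved, one can split $|G|_p = |G/N|_p \cdot |N|_p$ and try to add the bounds, provided $O_p(G/N) = 1$ as well — so the real content is the case where $G$ is, in an appropriate sense, irreducible. The key structural fact for solvable $G$ with $O_p(G)=1$ is that $F = \bF(G)$ is a $p'$-group (since $O_p(G) = O_p(\bF(G))$), $\bC_G(F) \le F$, and hence $G/F$ embeds in $\prod \mathrm{Out}$ of the Sylow subgroups of $F$; more usefully, $V := O_{p'}(G)$ contains its own centralizer in an appropriate sense and $G$ acts coprimely on $V$ — this is exactly the setup where Isaacs' Lemma~\ref{coprimeaction} applies.

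The heart of the argument: let $P \in \Syl_p(G)$ act on $V = O_{p'}(G)$. I want to build an irreducible Brauer (equivalently, here, ordinary, since $V$ is a $p'$-group) character whose degree has large $p$-part, forcing $n$ to be large relative to $|P|$. Concretely, one produces a chain $V = V_0 \trianglelefteq V_1 \trianglelefteq \cdots$ or works inside $V/\Phi(V)$ viewed as a module, picks by Lemma~\ref{coprimeaction} an element (or a character $\lambda$ of an abelian section) whose stabilizer in $P$ has order at most $|P|^{1/2}$, so that the induced character — or a constituent of $\mathrm{Ind}$ from the inertia group — has degree divisible by $|P : \bC_P(\lambda)| \ge |P|^{1/2}$ in its $p$-part. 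That gives $\log_p|P| \le 2n$ in one step. The factor $2.5$ rather than $2$ must come from the "few exceptional configurations" — the non-abelian composition factors or the small primitive permutation actions ($A\Gamma L(1,8)$, $AGL(2,3)$, $ASL(2,3)$) appearing in Theorem~\ref{charorbit}, or from the places where Lemma~\ref{coprimeaction} only gives $|P|^{1/2}$ but an extra chief factor is needed to separate degrees — so I expect the proof to run a careful induction where at each stage one peels off a chief factor below $V$, invoking Lemma~\ref{coprimeaction} on the (coprime) action of $P$ on it, and accumulating the contributions; the exceptional small cases are handled by the explicit subsets constructed in the proof of Theorem~\ref{charorbit}, which cost a little extra, yielding the slack between $2$ and $2.5$.

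The main obstacle will be bookkeeping the induction so the constant does not blow up: one must ensure that when $G$ is not already "irreducible" and we factor through a quotient, the hypothesis $O_p = 1$ is preserved on both pieces, and that the separate $p$-parts genuinely multiply (i.e. that a chief factor contributing to $|P|$ is actually "seen" by some Brauer character degree and not absorbed). The delicate point is passing from "$P$ acts with small fixed points on the module $V/\Phi(V)$" to "there is an irreducible Brauer character of $G$ of degree with the required $p$-part," which requires Clifford theory over the $p'$-group $V$ together with the observation that characters of $V$ extend or induce cleanly because $(|V|, p) = 1$; I would structure this via Theorem~\ref{charorbit} applied to the layer (when a non-abelian chief factor occurs) and via Lemma~\ref{coprimeaction} directly (when the chief factor is abelian), combining the two into a single recursive estimate $\log_p|G|_p \le 2.5\,n$.
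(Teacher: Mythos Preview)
Your central observation is correct and already overshoots the target: once you apply Lemma~\ref{coprimeaction} to the faithful action of a Sylow $p$-subgroup $P$ on the abelian $p'$-group $\Irr(\bF(G)/\Phi(G))$ and obtain a linear character $\lambda$ with $|\bC_P(\lambda)|\le |P|^{1/2}$, any $\chi\in\IBr_p(G)$ lying over $\lambda$ has $\chi(1)_p\ge |P:\bC_P(\lambda)|\ge |P|^{1/2}$, giving $|G|_p\le p^{2n}$. That single step already suffices, and the stated bound $p^{2.5n}$ follows a fortiori; the inductive chain, the chief-factor bookkeeping, and the worry about constants blowing up are all unnecessary.

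Where you go astray is in trying to account for the slack between $2$ and $2.5$ by invoking Theorem~\ref{charorbit} and the exceptional primitive groups $A\Gamma L(1,8)$, $AGL(2,3)$, $ASL(2,3)$. That theorem treats non-abelian minimal normal subgroups and is used only in the genuinely non-solvable part of the $p$-solvable analysis; it has no role whatsoever here, since a solvable group has no non-abelian chief factors. The paper's actual proof takes a completely different route, not orbit-theoretic at all: it cites from \cite{YY5} that for $p\ge 5$ a solvable group with $O_p(G)=1$ and $|G|_p=p^a$ possesses a $p$-block of defect $d\le\tfrac35 a$, and then uses the standard fact that every Brauer character in a block of defect $d$ has degree divisible by $p^{a-d}$, so $a-d\le n$ and hence $a\le\tfrac52 n$. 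The constant $2.5$ is thus inherited from the $\tfrac35$ in the cited block-defect result, not from any permutation-group exceptions. Your one-step argument, carried out cleanly, is more elementary and yields a sharper constant; the paper's approach is short only because the hard work is delegated to \cite{YY5}.
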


\begin{proof}
Let $|G|_p=p^a$. By \cite{YY5}, the group $G$ has a $p$-block of defect $d\le \frac 35 a$.
Since $a-d \le n$ (see \cite[Section 15]{Isaacs/book}), we obtain $a\le \frac 52 n$. Hence the claim holds.
\end{proof}

\begin{remark}
For $G$ a group of odd order with $O_p(G)=1$, Espuelas and Navarro have shown in \cite{EspuelasNavarro} that there is in fact a $p$-block of defect $d \le \lfloor a/2 \rfloor$ (and this bound is best possible).
Using the same argument (and notation) as above, we then obtain the better bound
$|G|_p \le p^{2n}$ in Theorem~\ref{solvableepboundp5}. Already in \cite{EspuelasNavarro} the question is posed whether for finite groups with $O_p(G)=1$ and $p\ge 5$, such $p$-blocks of small defect always exist; clearly, this would then also give a better bound in Theorem~A, for $p\ge 5$.
It was already noticed in \cite{EspuelasNavarro} that for $p=2$
for example the group $G=\fA_7$ has no 2-block of the desired small defect 1;
note that we still have $|G|_2\le 2^{2n}$ in this case.
However,  the example $G=M_{22}$ (discussed later) shows that
for $p=2$ the bound $|G|_2\le 2^{2n}$ does not hold in general;
there may still be room to improve the bounds given in Theorem~A, though.
\end{remark}


\begin{theorem} \label{solvableepbound}
Let $G$ be a finite solvable group with $O_p(G)=1$ and set $n=\bar e_p(G)$. Then $|G|_p \leq p^{15n}$ if $p=2$ or $p=3$.
\end{theorem}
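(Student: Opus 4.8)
The plan is to mimic the strategy used for $p\ge 5$ in Theorem~\ref{solvableepboundp5}, namely to produce a $p$-block of small defect relative to $a=\log_p|G|_p$ and then invoke the inequality $a-d\le n=\bar e_p(G)$ from \cite[Section 15]{Isaacs/book}. For $p=2$ and $p=3$ the sharp block-defect bounds available in the literature (e.g.\ the Espuelas--Navarro-type results) are weaker than the $d\le \tfrac35 a$ bound for $p\ge 5$, so one cannot hope for a constant as good as $2.5$; the target constant $15$ is the slack one is willing to accept. Concretely, I would first reduce to the case $\bF(G)=\bF^*(G)$ with $\Phi(G)=1$, so that $V=\Irr(\bF(G))$ is a faithful completely reducible $G/\bF(G)$-module (over possibly several fields), exactly as in the crossed-out argument in the excerpt. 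This is the standard reduction: one replaces $G$ by $G/\Phi(G)$, notes $\bar e_p$ does not increase, and uses that $O_p(G)=1$ forces $C_G(\bF(G))\le \bF(G)$.

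The core step is an orbit-theorem input: I would apply the refined orbit theorems for $p$-solvable (here, solvable) linear groups — the results of \cite{YY5}, or Gluck-type orbit theorems — to find a normal subgroup $K\le \bF_2(\bar G)$ and a $\bar G$-orbit representative $\lambda\in V$ such that for every $P\in\Syl_p(\bar G)$ one has $\bC_P(\lambda)\le K$, and moreover the relevant Sylow $p$-subgroups of $K\cap\bF(\bar G)$ and of $K\bF(\bar G)/\bF(\bar G)$ are abelian, with the Isaacs coprime-action bound (Lemma~\ref{coprimeaction}) giving a further factor $\tfrac12$ on the centralizer inside $O_p(K\cap\bF(\bar G))$. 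Writing $p^t=|\bar G|_p$ and splitting $t=n_1+n_2+n_3$ across $K\cap\bF(\bar G)$, $K\bF(\bar G)/\bF(\bar G)$ and $\bar G/K$, one gets from a Brauer (or ordinary, lifted) character over $\lambda$ the bound $\chi(1)_p\ge p^{n_3}\cdot p^{n_1/2}$, hence $n\ge n_3+n_1/2$; and from the abelian pieces, by choosing suitable module elements in $\Irr$ of the appropriate $p'$-layers with trivial centralizer in the abelian $p$-group acting, one gets $n\ge n_2$ and $n\ge n_1$. Combining, $n\ge \max(n_3+n_1/2,\,n_2,\,n_1)\ge \tfrac{2}{5}(n_1+n_2+n_3)$, which would already give $|G|_p\le p^{2.5n}$ — but this is exactly the argument that had a glitch, and for $p\in\{2,3\}$ the orbit theorem one actually has at hand is weaker (one may be forced into finitely many exceptional small configurations, and $\bF_2$ may need to be replaced by $\bF_3$ with an extra abelian layer), so the honest constant degrades.

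The main obstacle, and the reason for the larger constant, is precisely handling the small exceptional cases and the weaker orbit/regular-orbit theorems available for the primes $2$ and $3$: for these primes the "two (or three) regular-orbit" phenomena can fail for a bounded list of modules, and the clean decomposition $K\le \bF_2(\bar G)$ with abelian layers may instead only give $K\le \bF_3(\bar G)$, costing one more factor in the accounting $t=n_1+n_2+n_3+n_4$. I would therefore carry out the estimate with this extra layer, bound each $n_i$ linearly in $n$ using the same three devices (orbit centralizer in the module, Isaacs' $\tfrac12$-bound on the coprimely-acting $p$-part, and abelian action on a lower $p'$-layer), track the worst case through the exceptional configurations of \cite{YY5} and \cite{GluckPerm}, and check that the resulting linear combination is bounded by $15n$; absorbing the finitely many exceptions (and the difference between $\bF_2$ and $\bF_3$) into the generous constant is what makes $15$ work where $2.5$ would not.
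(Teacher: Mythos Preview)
Your plan has a genuine gap, and it also diverges from the paper's argument. You propose to adapt the $\bF_2$-orbit method from the $p\ge 5$ case, speculating that for $p\in\{2,3\}$ the input from \cite{YY5} might degrade to something like $K\le\bF_3(\bar G)$ with one extra abelian layer, and that ``absorbing the finitely many exceptions into the generous constant is what makes $15$ work.'' But the orbit theorem of \cite{YY5} that you invoke genuinely requires $p\ge 5$; you cite no replacement for $p=2,3$, and none with the shape you describe is available in the references. Even granting a hypothetical $\bF_3$-version, your own accounting $n\ge\max(n_4+n_1/2,\,n_3,\,n_2,\,n_1)$ would yield a constant around $3.5$, not $15$, so the exceptional configurations would have to inflate the bound by a further factor of four or more, and you give no mechanism for this. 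The constant~$15$ is never actually derived in your plan.

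The paper's proof is different and considerably simpler. It abandons the refined \cite{YY5} orbit theorem entirely and instead invokes \cite[Theorem~3.3]{YY1}, an orbit result valid for \emph{all} primes, which produces $\lambda\in\IBr(\bF(G)/\Phi(G))$ with $T=\bC_G(\lambda)\le\bF_8(G)$. Then $|G:T|_p\le p^n$ by taking $\chi\in\IBr_p(G)$ lying over $\lambda$, and each of the seven Fitting layers $K_i=\bF_i(G)/\bF_{i-1}(G)$, $2\le i\le 8$, satisfies $|K_i|_p\le p^{2n}$ via Lemma~\ref{coprimeaction} (the Sylow $p$-part of each layer acts faithfully and coprimely on the $p'$-part of the layer below, since a $p$-group acting completely reducibly on a $p$-module acts trivially). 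The constant is then exactly $7\cdot 2+1=15$, with no block-defect step, no case analysis, and no exceptional configurations to track. The key idea you are missing is to trade the sharp-but-restricted $\bF_2$ bound for the cruder-but-universal $\bF_8$ bound from \cite{YY1}.
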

\begin{proof}
By Gasch\"utz's theorem, $G/\bF(G)$ acts faithfully and completely reducibly on $\Irr(\bF(G)/\Phi(G))$. Since $p \nmid |\bF(G)/\Phi(G)|$, $\Irr(\bF(G)/\Phi(G))=\IBr(\bF(G)/\Phi(G))$. It follows from ~\cite[Theorem 3.3]{YY1} that there exists $\lambda \in \IBr(\bF(G)/\Phi(G))$ such that $T = \bC_{G}(\lambda) \leq \bF_8(G)$.

Let $K_{i+1}=\bF_{i+1}(G)/\bF_i(G)$ and let $K_{i+1, p}$ be the Sylow $p$-subgroup of $K_{i+1}$ for all $i \geq 1$.
We know that $K_{i+1, p}$ acts faithfully and completely reducibly on $K_i/\Phi(G/\bF_{i-1}(G))$. It is clear that we may write $K_i/\Phi(G/\bF_{i-1}(G))=V_{i1}+V_{i2}$ where $V_{i1}$ is the $p$-part of $K_i/\Phi(G/\bF_{i-1}(G))$ and $V_{i2}$ is the $p'$-part of $K_i/\Phi(G/\bF_{i-1}(G))$ for all $i \geq 1$.

We observe that $K_{i+1, p}$ acts faithfully and completely reducibly on $\Irr(V_{i2})$ for all $i \geq 1$. Since $\IBr(V_{i2})=\Irr(V_{i2})$, we have $|K_{i+1, p}| \leq p^{2n}$ by Lemma ~\ref{coprimeaction}.

Next, we show that $|G : T|_{p} \leq p^n$.

Take $\chi\in \IBr(G)$ lying over $\lambda$. Then $|G : T|_p$ divides $\chi(1)$, which is at most $p^n$. 


We know from before that $|K_{i,p}| \leq p^{2n}$ for $2 \leq i \leq 8$. This implies that $|G|_p \leq (p^{2n})^7 \cdot p^n = p^{15 n}$.
\end{proof}


\subsection{The $p$-solvable case}

\-\smallskip

We now obtain bounds for $p$-solvable groups and then extend those to arbitrary groups.

\begin{theorem}\label{charppart}
Let $G$ be a $p$-solvable group for an odd prime $p$.
Assume that $G$ has no  nontrivial solvable normal subgroup.
Then there exists $\chi\in \IBr_p(G)$ such that $\chi(1)_p \geq \sqrt{|G|_p}$.
\end{theorem}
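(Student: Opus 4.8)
The setup is that $G$ is $p$-solvable for an odd prime $p$, and $G$ has no nontrivial solvable normal subgroup, so in particular $\bF(G)=1$ and the generalized Fitting subgroup $\bF^*(G) = E(G)$ is a direct product of non-abelian simple groups. Write $V = \bF^*(G)$ and group the simple factors into $G$-orbits; since $C_G(V) \le V$ (as $\bF(G)=1$), $G/V$ embeds into $\Aut(V)$ and in particular $G/V$ is $p$-solvable. The plan is to produce an irreducible Brauer character $\chi \in \IBr_p(G)$ whose degree has large $p$-part by starting from an irreducible (ordinary) character $v$ of $V$ chosen via the orbit theorem, Theorem~\ref{charorbit}, and then controlling the $p$-part of the degree of a Brauer character of $G$ lying over (the Brauer reduction of) $v$. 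Note $V$ has order coprime to $p$ is \emph{not} assumed, but the relevant point is that $\Irr(V)$ and $\IBr_p(V)$ need to be related; since the simple factors may have order divisible by $p$, I would instead work with $\IBr_p(V)$ directly, or first reduce to the case where we pick $v \in \IBr_p(V)$. Applying Theorem~\ref{charorbit} (after checking it applies orbit-by-orbit, using that a direct product decomposes $\IBr_p$) gives nonprincipal $v_1, v_2, v_3$ of distinct degrees with all Sylow $p$-subgroups of $\bC_G(v_j)$ contained in $O = \bigcap_k N_G(S_k)$.

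The key quantitative step: fix $P \in \Syl_p(G)$ and pick $v = v_j$ so that $\bC_P(v) \le O$. For $\chi \in \IBr_p(G)$ lying over $v$, the index $|G : \bC_G(v)|$ divides $\chi(1)$ (this is the standard Clifford-theoretic divisibility, valid for Brauer characters since $V \nor G$), hence $|G : \bC_G(v)|_p \mid \chi(1)_p$. Now $|G:\bC_G(v)|_p = |P : \bC_P(v)| \cdot |\bC_G(v)|_p/|\bC_P(v)|$ — more precisely $|G:\bC_G(v)|_p \ge |P : \bC_P(v)|$, and since $\bC_P(v) \le O$ we get $|G:\bC_G(v)|_p \ge |P|/|O \cap P| = |G:O|_p$. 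So $\chi(1)_p \ge |G:O|_p$. To finish, I need $|O|_p$ to be at most $\sqrt{|G|_p}$, or more precisely I need to handle the "inside $O$" part separately and multiply. The structure $O/V$ is the kernel of the permutation action of $G$ on the set of simple factors, so $O$ is (up to the coprime-order issue) a subgroup of $\prod_k \Out(S_k)$-type data times the permutation part; the standard fact that $|\Out(S)|_p$ is small relative to $|S|_p$ for simple $S$ (combined with the factors being permuted) should give $|O : V|_p \le \sqrt{|O|_p}$ or similar. Then recursively/directly bound $|V|_p$: within each simple factor $S$, one invokes that a non-abelian simple group has a $p$-block of defect at most half (or Lemma~\ref{chardegsimple}-type input) — actually the cleanest route is that $V$ itself, being a product of simple groups, has an irreducible character of $p$-defect zero or small defect, giving $\chi(1)_p \ge \sqrt{|V|_p}$ for a suitable $\chi \in \IBr_p(V)$, and this can be chosen compatibly with the orbit structure.

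I would assemble these as follows: first reduce to $\bF^*(G) = V$ a product of non-abelian simple groups; second, apply Theorem~\ref{charorbit} within each orbit and combine to get $v \in \IBr_p(V)$ (or $\Irr(V)$, then reduce) with $\bC_P(v) \le O$ \emph{and} with $v(1)_p$ already accounting for a $\sqrt{|V|_p}$ factor from the simple-group input; third, extend $v$ up through $O$ exploiting that $P \cap O$ normalizes each factor so acts on each $\IBr_p(S_k)$ — here use Lemma~\ref{coprimeaction} or a regular-orbit argument on $O/V$ acting on the relevant module to pick up another square-root factor; fourth, use the Clifford divisibility $|G:\bC_G(v)| \mid \chi(1)$ for $\chi \in \IBr_p(G)$ over $v$ to convert $\bC_P(v) \le O$ into $\chi(1)_p \ge |G:O|_p$; fifth, multiply the three contributions. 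The main obstacle I anticipate is bookkeeping the $p$-part of $|O|$: one must simultaneously show that the "permutation/outer" layer $O/V$ contributes a square root of its own order to some Brauer degree \emph{and} that this is compatible with the character $v$ already chosen on $V$ — i.e. one genuinely needs an irreducible Brauer character of $G$ over $v$ whose degree absorbs \emph{both} $|G:O|_p$ and a square root of $|O|_p$; reconciling these (perhaps by choosing $v$ inside a $P\cap O$-stable "square-root" set of characters of $V$ first, then pushing up to $G$) is the delicate part, and is presumably where the original argument in \cite{YY25} had its glitch.
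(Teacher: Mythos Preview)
Your overall framework is right (the socle $V=\bF^*(G)$ is a product of nonabelian simple groups, apply Theorem~\ref{charorbit} orbit-by-orbit, use Clifford divisibility), but you missed the key structural fact that drives the whole argument: since $G$ is $p$-solvable, every nonabelian simple composition factor is a $p'$-group, hence $p\nmid |V|$. This single observation dissolves almost all of the difficulties you anticipate. It gives $\Irr(V)=\IBr_p(V)$ for free and $|V|_p=1$, so there is no ``$\sqrt{|V|_p}$ factor from the simple-group input'' to extract, no defect-zero block argument inside the simple factors, and Lemma~\ref{simplep} (which requires $p\mid |S|$) is irrelevant here.

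With $p\nmid |V|$ in hand, the paper's proof is also structurally simpler than your plan: rather than building a \emph{single} $v\in\Irr(V)$ whose induced Brauer character simultaneously accounts for $|G:O|_p$ and a square root of $|O|_p$, it produces \emph{two} characters and takes the larger. For the $O$-layer, a Sylow $p$-subgroup of $O$ acts coprimely on each simple factor $S_{ij}$, so Lemma~\ref{simplecoprime} supplies $\mu_{ij}\in\Irr(S_{ij})$ in a regular orbit; setting $\mu=\prod\mu_{ij}$ makes $\bC_O(\mu)$ a $p'$-group, and any $\chi_1\in\IBr_p(G)$ over $\mu$ satisfies $\chi_1(1)_p\ge |O|_p$. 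For the permutation layer, Theorem~\ref{charorbit} applied to each minimal normal $L_i$ and combined as you describe gives $\lambda$ with $\bC_G(\lambda)\le O$, so $\chi_2(1)_p\ge |G:O|_p$ for $\chi_2$ over $\lambda$. Since $|O|_p\cdot|G:O|_p=|G|_p$, one of $\chi_1,\chi_2$ does the job. Your attempt to ``multiply the three contributions'' into one character is both unnecessary and is precisely the compatibility issue you flagged as delicate; the max-of-two device sidesteps it entirely. (Incidentally, the ``glitch in \cite{YY25}'' the paper refers to is in the proof of Theorem~\ref{charorbit}, not in the present theorem.)
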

\begin{proof}
Since $G$ has no nontrivial solvable normal subgroup,
the socle $L$ of $G$ can been written as
$L= L_1\times \cdots  \times L_n$,
where  $L_i=S_{i1} \times \cdots \times S_{it_i}$  is minimal normal in $G$,
and $S_{i1}, \ldots, S_{i t_i}$ are isomorphic to a nonabelian simple group $S_i$.

We observe that since $G$ is $p$-solvable, $p \nmid |L|$. Thus $\IBr_p(L_i)=\Irr(L_i)$ and $\IBr_p(L)=\Irr(L)$.

Write $O_i=\bigcap_{j=1}^{t_i} N_G (S_{ij})$ and $O=\bigcap_{i=1}^n O_i$.
Clearly $O$ and all $O_i$ are normal in $G$, all $S_{ij}$ are normal in $O_i$ and $O$.
Repeatedly using Dedekind's Modular Law, we have that
$$L= \bigcap_{i=1}^n\bigcap_{j=1}^{t_i} S_{ij}\bC_G(S_{ij}).$$
This implies that
$$O/L=O/\bigcap_{i=1}^n\bigcap_{j=1}^{t_i} S_{ij}\bC_G(S_{ij})\lesssim
\prod_{i, j} N_G(S_{ij})/(S_{ij}\bC_G(S_{ij}))\lesssim \prod_{i,j} \Out(S_{ij}).$$
Since all $S_{ij}$ are $p$-solvable, $\Out(S_{ij})$ has a normal cyclic Sylow $p$-subgroup (for example, ~\cite[Lemma 2.3(ii)]{LNTH}).
Thus $O/L$ has a normal and abelian Sylow $p$-subgroup.

By Lemma ~\ref{simplecoprime}, it is easy to find an irreducible character $\mu$ of $L$ such that
$\bC_O(\mu)$ is a $p'$-group.
Hence there exists an irreducible constituent $\chi_1$ of $\mu^G$ such that
$$\chi_1(1)_p \geq |O|_p.$$
Also, by Theorem ~\ref{charorbit}, we may find $\lambda_i\in \IBr_p(L_i)$ such that ${\bC}_G(\lambda_i)\leq O_i$ for each $i$.
Set $\lambda=\prod_i\lambda_i$ and let $\chi_2$ be an irreducible constituent
of $\lambda^G$.
Since all $L_i$ are normal in $G$, we have
$$\bC_G(\lambda)=\bigcap_i \bC_G(\lambda_i)\leq \bigcap_i O_i=O.$$
This implies that $$\chi_2(1)_p\geq |G/O|_p.$$
Thus there exists $\chi\in \{\chi_1, \chi_2\}$ such that
$\chi(1)_p\geq \sqrt{|G|_p}$.
\end{proof}

\subsection{The general case}

\-\smallskip

For a group $G$, let $b(G)$ denote the largest degree of an irreducible character of $G$.

\begin{lemma}\label{chardegreeside}
Let $G$ be a finite group, $P\in \Syl_p(G)$ and $\bar P=P/O_p(G)$; set $n=\bar e_p(G)$.
Assume that $|G: O_p(G)|_p\leq p^{k n}$. Then $b(\bar P)\leq p^{{k n}/2}$ and $\dl(\bar P) \leq 4 + \log_2 n + \log_2 k$.
\end{lemma}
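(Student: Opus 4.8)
The plan is to derive both bounds from the hypothesis $|G:O_p(G)|_p\le p^{kn}$ by analyzing the group $\bar P=P/O_p(G)$, whose order is exactly $|G:O_p(G)|_p\le p^{kn}$.

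\smallskip

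\textbf{The bound $b(\bar P)\le p^{kn/2}$.} First I would recall the classical fact (Isaacs, \emph{Character Theory}, Corollary~2.30) that for any finite group $Q$ one has $b(Q)^2\le |Q:Z(Q)|\le |Q|$, so $b(Q)\le |Q|^{1/2}$. Applying this to $Q=\bar P$ and using $|\bar P|=|G:O_p(G)|_p\le p^{kn}$ gives $b(\bar P)\le p^{kn/2}$ immediately. (One could also invoke that $b(Q)\le p^{\lfloor (\log_p|Q|)/2\rfloor}$ for a $p$-group, but the crude estimate suffices.)

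\smallskip

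\textbf{The bound $\dl(\bar P)\le 4+\log_2 n+\log_2 k$.} Here the key input is a theorem bounding the derived length of a $p$-group in terms of the degree of its largest irreducible character. The relevant result is that if $Q$ is a $p$-group with $b(Q)=p^e$, then $\dl(Q)$ is bounded logarithmically in $e$; more precisely, the standard estimate (going back to work building on Isaacs--Passman, and in the sharp form due to, e.g., the literature on derived length versus character degrees for $p$-groups) gives $\dl(Q)\le 2\log_2 e + c$ for a small absolute constant, or in the cleanest available form $\dl(Q)\le \log_2(e)+ O(1)$ — whichever bound is cited earlier/available, I would use it. With $b(\bar P)\le p^{kn/2}$ we have $e\le kn/2$, so $\log_2 e\le \log_2(kn/2)=\log_2 k+\log_2 n-1$, and feeding this into the derived-length bound produces $\dl(\bar P)\le 4+\log_2 n+\log_2 k$ after absorbing constants. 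I should be a little careful to check that the constant $4$ is indeed what comes out: if the cited bound is $\dl(Q)\le \lfloor\log_2 e\rfloor + 1$ type, then $\dl(\bar P)\le \log_2(kn/2)+1+\epsilon$-corrections, and I would verify $4$ covers the rounding; if instead the ambient lemma being used is the cleaner statement $b(Q)\ge 2^{\dl(Q)-1}$ rephrased $p$-adically, the arithmetic is similarly routine.

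\smallskip

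\textbf{Main obstacle.} The only real issue is pinning down the exact constant in the $p$-group derived-length-versus-$b(Q)$ inequality and making sure it yields precisely the stated $4+\log_2 n+\log_2 k$ rather than, say, $5+\cdots$. This is a matter of quoting the right reference (and possibly a separate treatment of very small $e$, where the logarithm misbehaves, handled by noting $\dl(\bar P)\le 1$ when $\bar P$ is abelian and small cases directly); the structural content is entirely standard and no new ideas are needed beyond $b(Q)\le|Q|^{1/2}$ and the known logarithmic derived-length bound for $p$-groups.
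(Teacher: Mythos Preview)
Your approach is correct and matches the paper's: the bound $b(\bar P)\le |\bar P|^{1/2}\le p^{kn/2}$ is exactly what the paper does, and for the derived length the paper also reduces to a logarithmic bound in $s$, where $b(\bar P)=p^s$.

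The obstacle you flag---pinning down the constant $4$---is resolved in the paper by a specific three-step chain rather than a single black-box inequality of the form $\dl(Q)\le \log_2 e + O(1)$. First, by \cite[Theorem~12.26]{Isaacs/book} (using nilpotency of $\bar P$), there is an abelian subgroup $B\le \bar P$ with $|\bar P:B|\le b(\bar P)^4$. Second, by Podoski--Szegedy \cite[Theorem~5.1]{Podoski}, this can be upgraded to a \emph{normal} abelian subgroup $A$ with $|\bar P:A|\le |\bar P:B|^2\le b(\bar P)^8=p^{8s}$. Third, the classical bound \cite[Satz~III.2.12]{Huppert1} gives $\dl(\bar P/A)\le 1+\log_2(8s)$, so $\dl(\bar P)\le 2+\log_2(8s)=5+\log_2 s$. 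Since $s\le kn/2$, this yields $\dl(\bar P)\le 5+\log_2(kn/2)=4+\log_2 n+\log_2 k$, which is where the $4$ comes from.
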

\begin{proof}
Clearly, $b(\bar P) \leq |\bar P|^{1/2} \leq p^{{k n}/ 2}$.

By ~\cite[Theorem 12.26]{Isaacs/book} and the nilpotency of $\bar P$, we have that $\bar P$ has an abelian subgroup $B$ of index at most $b(\bar P)^4$. By ~\cite[Theorem 5.1]{Podoski}, we deduce that $\bar P$ has a normal abelian subgroup $A$ of index at most $|\bar P:B|^2$. Thus, $|\bar P:A| \leq |\bar P:B|^2 \leq b(\bar P)^{8s}$, where $b(\bar P)=p^s$. By ~\cite[Satz III.2.12]{Huppert1}, $\dl(\bar P/A) \leq 1+\log_2(8s)$ and so $\dl(\bar P) \leq 2+ \log_2(8s)=5+\log_2(s)$. Since $s$ is at most ${k n}/ 2$, we have $\dl(\bar P) \leq 4 + \log_2 n + \log_2 k$.
\end{proof}

\begin{theorem}\label{chardegreepsolvablebound}
Let $G$ be a finite $p$-solvable group for an odd prime $p$, $P \in \Syl_p(G)$ and $\bar P=P/O_p(G)$; set $n=\bar e_p(G)$.
We set $k=4.5$ if $p\geq 5$, and $k=17$ if $p=3$.
Then $|G: O_p(G)|_p \leq p^{k n}$, $b(\bar P)\leq p^{{k n}/ 2}$, and $\dl(\bar P) \leq 4 + \log_2 n + \log_2 k$.
\end{theorem}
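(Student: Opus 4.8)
The plan is to reduce to the case $O_p(G)=1$ and then to split $|G|_p$ along the solvable radical of $G$, invoking the three results already established: the solvable bounds of Theorems~\ref{solvableepboundp5} and \ref{solvableepbound}, and the $p$-solvable bound of Theorem~\ref{charppart}. Once the first assertion $|G:O_p(G)|_p\le p^{kn}$ is proved, the other two fall out of Lemma~\ref{chardegreeside}.

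First I would observe that $\bar P=P/O_p(G)$ has order $|G:O_p(G)|_p$, and that on replacing $G$ by $\widetilde G=G/O_p(G)$ one has $O_p(\widetilde G)=1$, $\widetilde G$ is still $p$-solvable, $\bar e_p(\widetilde G)\le \bar e_p(G)=n$, and $|\widetilde G|_p=|G:O_p(G)|_p$. Hence it suffices to prove: if $G$ is $p$-solvable with $O_p(G)=1$, then $|G|_p\le p^{kn}$. Assume now $O_p(G)=1$ and let $R$ be the largest solvable normal subgroup of $G$. Then $O_p(R)$ is characteristic in $R$, hence normal in $G$, so $O_p(R)\le O_p(G)=1$; and since $R\trianglelefteq G$ we have $\bar e_p(R)\le n$. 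Therefore Theorem~\ref{solvableepboundp5} (when $p\ge 5$) and Theorem~\ref{solvableepbound} (when $p=3$) give, respectively,
\[
|R|_p\le p^{2.5\,n}\quad(p\ge 5),\qquad\qquad |R|_p\le p^{15\,n}\quad(p=3).
\]

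Next consider $\overline G=G/R$: it is a quotient of $G$, hence $p$-solvable, and it has no nontrivial solvable normal subgroup, since the full preimage in $G$ of such a subgroup would be a solvable normal subgroup of $G$ strictly containing $R$. As $p$ is odd, Theorem~\ref{charppart} applies to $\overline G$ and produces $\chi\in\IBr_p(\overline G)$ with $\chi(1)_p\ge\sqrt{|\overline G|_p}$; since $\chi(1)_p\le p^{\bar e_p(\overline G)}\le p^n$, we get $|\overline G|_p\le p^{2n}$. Combining this with the previous paragraph via $|G|_p=|R|_p\cdot|G/R|_p$ yields $|G|_p\le p^{2.5 n}\cdot p^{2n}=p^{4.5 n}$ for $p\ge 5$, and $|G|_p\le p^{15 n}\cdot p^{2n}=p^{17 n}$ for $p=3$, which is the first assertion. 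Finally, $b(\bar P)\le p^{kn/2}$ and $\dl(\bar P)\le 4+\log_2 n+\log_2 k$ follow immediately from Lemma~\ref{chardegreeside}, whose only hypothesis, $|G:O_p(G)|_p\le p^{kn}$, has just been verified.

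Given the machinery that precedes it, there is no genuine obstacle here; the proof is essentially bookkeeping. The points that still need a little care are the reduction to $O_p(G)=1$, the verification that $O_p(R)=1$ for the solvable radical $R$, the monotonicity of $\bar e_p$ under passing to normal subgroups and quotients (already noted in the text), and checking that the constants add correctly, $2.5+2=4.5$ and $15+2=17$. The one spot where a misstep is easy is making sure Theorem~\ref{charppart} is applied to a group that really has no nontrivial solvable normal subgroup — which is exactly why one quotients by the solvable radical rather than, say, by the Fitting subgroup.
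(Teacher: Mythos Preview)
Your proof is correct and follows essentially the same approach as the paper: split $|G:O_p(G)|_p$ along the solvable radical, apply Theorem~\ref{solvableepboundp5} or Theorem~\ref{solvableepbound} to the solvable part, apply Theorem~\ref{charppart} to the quotient, and finish with Lemma~\ref{chardegreeside}. The only cosmetic difference is that you first pass to $G/O_p(G)$ to assume $O_p(G)=1$, whereas the paper keeps $O_p(G)$ in play via $O_p(T)=O_p(G)$; both routes are equivalent.
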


\begin{proof}
We first prove the assertion in the case when $p\geq 5$. In view of Lemma ~\ref{chardegreeside}, we only need to show that $|G: O_p(G)|_p\leq p^{4.5 n}$.

Let $T$ be the maximal normal solvable subgroup of $G$.
Since $O_p(G) \leq T$, $O_p(T)=O_p(G)$.
Since $T \nor G$, $p^{n+1}$ does not divide $\lambda(1)$ for all $\lambda \in \IBr_p(T)$.
Thus by Theorem ~\ref{solvableepboundp5}, $|T: O_p(G)|_p\leq p^{2.5n}$.

Let $\tilde G=G/T$ and $\bar G=\tilde G / \bF^*(\tilde G)$. It is clear that $\bF^*(\tilde G)$ is a direct product of finite non-abelian simple groups. Since $\tilde G$ is $p$-solvable, $p \nmid |\bF^*(\tilde G)|$.

By Theorem ~\ref{charppart}, $|\bar G|_p \leq p^{2n}$, and we are done in this case.

We now consider the case when $p=3$ and we only need to show that $|G: O_p(G)|_p\leq p^{17 n}$ in view of Lemma ~\ref{chardegreeside}. The proof is similar to the previous case when $p \geq 5$ but using Theorem ~\ref{solvableepbound} instead of Theorem ~\ref{solvableepboundp5}.
\end{proof}


By the work of \cite{Gagola}, and stated explicitly in ~\cite[Lemma 3.1]{LNTH},
we have the following result that is used in both the character context as well as the context of conjugacy classes:
\begin{lemma}\label{simplep}
Let $S$ be a finite non-abelian simple group and let $p$ be a prime dividing $|S|$. Then $|S|_p > |\Out(S)|_p$.
\end{lemma}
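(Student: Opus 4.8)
## Plan of Proof

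The claim is: for a finite non-abelian simple group $S$ and a prime $p \mid |S|$, one has $|S|_p > |\Out(S)|_p$. The natural strategy is a case division based on the classification of finite simple groups, which we are entitled to use since the whole paper rests on it. In each family we compare the $p$-part of the order of $S$ with the $p$-part of $|\Out(S)|$, using the known formulas for $|\Out(S)|$ (diagonal times field times graph automorphisms in the Lie type case).

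First I would dispose of the small cases: the alternating groups and the $26$ sporadic groups. For $\fA_n$ with $n \geq 5$, we have $|\Out(\fA_n)| = 2$ except for $n = 6$ where it is $4$; since $|S|$ is divisible by $4$ for all $n \geq 6$ and by $2$ for $n = 5$, and in fact a prime $p \mid |\fA_n|$ forces $p \leq n$ so $|\fA_n|_p$ is comfortably larger than $|\Out(\fA_n)|_p \leq 4$, this is immediate (for $n = 5, 6$ one just checks by hand). For the sporadic groups $|\Out(S)| \leq 2$ in all cases, while $|S|_p \geq p \geq 2$ for any $p \mid |S|$, so the inequality holds (with the single delicate point $p = 2$: but every sporadic $S$ has $|S|_2 \geq 4$, which one reads off from the order list).

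The main work is the groups of Lie type. Write $S$ as (a quotient of) a group of Lie type over $\FF_q$ with $q = r^f$, $r$ the defining characteristic. If $p = r$, then $|S|_p = q^N$ for $N$ the number of positive roots, which is at least $f$ (indeed much larger), while $|\Out(S)|_p$ involves only the field automorphisms of order $f$ (the diagonal and graph parts are prime to $r$ for the most part) — so $|S|_r = q^N \geq r^f > f_r \geq |\Out(S)|_r$, with a short separate check for the tiny rank-one cases. If $p \neq r$, then $p$ divides $|\Out(S)|$ only through the field automorphism part (order $f$, possibly doubled or tripled by a graph automorphism, and the diagonal part which is bounded, e.g. $d \leq n+1$ for type $A_n$); so $|\Out(S)|_p$ is bounded by $(cf)_p$ for a small constant $c$, whereas $p \mid |S|$ and $p \mid q^k - 1$ (or similar) for some $k \leq $ the rank, and one shows $|q^k - 1|_p$ together with the full root contribution already exceeds $(cf)_p$. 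The cleanest way is to cite that this is precisely the content of \cite{Gagola}: the statement is extracted from there and also recorded as \cite[Lemma 3.1]{LNTH}, so the proof reduces to invoking that reference.

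\begin{proof}
This is \cite[Lemma 3.1]{LNTH}, which in turn follows from the analysis in \cite{Gagola}.
\end{proof}

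The hard part, were one to prove it from scratch rather than cite it, is the uniform treatment of the cross-characteristic case ($p \neq r$) for the exceptional and twisted types, where the exact power of $p$ dividing $|\Out(S)|$ (coming from field automorphisms of order $f$, triality for ${}^3\!D_4$, the graph-field automorphisms for ${}^2\!B_2, {}^2\!G_2, {}^2\!F_4$) must be weighed against $|q^k \mp 1|_p$ for the relevant cyclotomic factors of $|S|$ — the potential pitfall being primes $p$ for which the $p$-part of $|S|$ is unexpectedly small (e.g. a Zsigmondy prime appearing to the first power only). Since the statement is already in the literature in exactly the needed form, the proof here is simply the citation.
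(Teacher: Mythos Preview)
Your proof is correct and matches the paper's own treatment exactly: the paper likewise states the lemma as a direct consequence of \cite{Gagola}, recorded explicitly as \cite[Lemma 3.1]{LNTH}, without reproducing the case analysis. Your surrounding sketch of the CFSG case division is accurate commentary but not needed for the formal proof.
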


In dealing with the simple groups, we need the following result which completes \cite[Theorem 2.5]{HTV} in that the remaining cases of alternating groups ($\fA_n$
for $n\in \{22,24,26\}$) are treated,
and it is a slight correction as
the exception in the case of $\fA_7$ at $p=2$ was overlooked.

\begin{theorem} \label{simplepcharwithspecial}
Let $S$ be a finite non-abelian simple group,
and let $p$ be a prime divisor of $|S|$.
Then there exists $\phi \in \IBr_p(S)$
such that
$$|\Aut(S)|_p < \phi(1)_p^2 $$
except in the following cases:

- $p=2$, $S=M_{22}$, then $|\Aut(S)|_2=2^8$, and $\bar e_2(S)=1$;

- $p=2$, $S=\fA_7$, then $|\Aut(S)|_2=2^4$,
and $\bar e_2(S)=2$;

- $p=3$, $S=\fA_7$, then $|\Aut(S)|_3=3^2$ and $\bar e_3(S)=1$.
\end{theorem}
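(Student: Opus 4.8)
The plan is to reduce the statement to a finite check on low-rank/sporadic cases plus a uniform argument for groups of Lie type in the defining characteristic, where the Steinberg character provides an enormous $p$-part. First I would split the simple groups $S$ into the usual families: alternating groups $\fA_n$, sporadic groups (including the Tits group), and groups of Lie type, the latter subdivided into the case where $p$ is the defining characteristic and the case where it is not.

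For groups of Lie type in defining characteristic $p$, I would take $\phi$ to be the reduction of the Steinberg character, which lies in $\IBr_p(S)$ and has $\phi(1)_p = |S|_p$. Since $|\Aut(S)|_p = |S|_p \cdot |\Out(S)|_p$ and, by Lemma~\ref{simplep}, $|S|_p > |\Out(S)|_p$, we get $\phi(1)_p^2 = |S|_p^2 > |S|_p\,|\Out(S)|_p = |\Aut(S)|_p$, with no exceptions. For groups of Lie type in non-defining characteristic, the Sylow $p$-subgroup is comparatively small while the group has Brauer characters of relatively large degree; here I would invoke the known classification of groups of Lie type with a ``large'' $p$-part (e.g.\ via the machinery behind \cite[Theorem 2.5]{HTV}) to reduce to finitely many small cases, and for each either exhibit a suitable $\phi\in\IBr_p(S)$ from the known decomposition matrices or verify the exception. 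For alternating groups, I would combine the work in \cite{HTV} — which already settles $\fA_n$ for all $n$ except $n\in\{22,24,26\}$ — with a direct analysis of those three residual values: here one uses that $|\Aut(\fA_n)|_p = 2|\fS_n|_p$ (for $n\ne 6$) and selects a partition $\lambda$ of $n$ whose associated $p$-modular irreducible (or, when convenient, an ordinary irreducible of $\fA_n$ whose $p$-part already exceeds $\sqrt{|\Aut|_p}$ and which remains irreducible mod $p$ by having maximal $p$-defect) has large enough $p$-part; the $p$-part of $|\fS_n|$ is controlled by the base-$p$ digit sum of $n$, so these are short computations. For the sporadic groups I would appeal to the ATLAS and the known Brauer character tables, checking each of the finitely many groups and each prime dividing the order; the only genuine exception that survives is $p=2$, $S=M_{22}$, with $|\Aut(M_{22})|_2 = 2^8$ and $\bar e_2(M_{22})=1$.

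The remaining delicate point is to correctly identify \emph{all} exceptions, and in particular not to overlook $\fA_7$. Since $\fA_7$ is both an alternating group and a group of Lie type in two different ways ($\PSL_2(7)\cong\PSL_3(2)$ is a different group, but $\fA_7$ has exceptional isogenies in small characteristic), it requires separate scrutiny at $p=2$ and $p=3$: one computes $|\Aut(\fA_7)|_2 = 2\cdot|\fS_7|_2 = 2^5$... more carefully, $|\fS_7|_2 = 2^4$, so $|\Aut(\fA_7)|_2 = 2^4$, and one checks from the $2$-modular character table of $\fA_7$ that $\bar e_2(\fA_7)=2$ with the largest $2$-part of a Brauer character degree being $4$, so $\phi(1)_2^2 \le 2^4 = |\Aut(S)|_2$, failing the strict inequality; similarly $|\Aut(\fA_7)|_3 = 3^2$ and $\bar e_3(\fA_7)=1$, again a boundary failure. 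I expect the main obstacle to be precisely this bookkeeping at the boundary: the theorem asserts a \emph{strict} inequality $|\Aut(S)|_p < \phi(1)_p^2$, so any family member or sporadic group where the best available $\phi$ only achieves equality must be caught and listed, which is why the statement must be phrased with these three explicit exceptions. Everything else — the Lie-type defining-characteristic case and the bulk of the alternating and sporadic cases — follows from Lemma~\ref{simplep} and the cited results with ample room to spare.
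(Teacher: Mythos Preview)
Your overall strategy matches the paper's: cite \cite[Theorem 2.5]{HTV} (with the overlooked $\fA_7$ exception at $p=2$ restored) to dispose of everything except $S=\fA_n$ for $n\in\{22,24,26\}$ at $p=2$, and then treat those three cases by hand. The part of your outline that re-proves the Lie-type and sporadic cases from scratch is correct but unnecessary; the paper simply absorbs all of that into the citation of \cite{HTV}.

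The genuine gap is in your handling of $\fA_{22}$, $\fA_{24}$, $\fA_{26}$ at $p=2$. You propose to ``select a partition $\lambda$ whose associated $p$-modular irreducible \dots\ has large enough $p$-part'' or to use ``an ordinary irreducible \dots\ which remains irreducible mod $p$ by having maximal $p$-defect''. Neither suggestion works as stated. The $2$-Brauer character tables of these groups are not available, so one cannot simply read off a suitable $\phi$, and the paper explicitly notes that the small-defect-block argument used in \cite{HTV} is not strong enough here. Moreover, having \emph{maximal} $p$-defect is never a criterion for an ordinary irreducible to remain irreducible modulo~$p$; you may be thinking of $p$-defect~$0$, but $\fA_n$ has no $2$-defect~$0$ characters for these values of $n$. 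The paper's actual argument is to exhibit specific partitions --- $(10,7,4,1)$ of $22$, $(14,7,2,1)$ of $24$, and $(14,7,4,1)$ of $26$ --- whose Specht modules $S^\lambda$ have $2$-parts $2^{13}$, $2^{12}$, $2^{14}$ in their dimensions by the hook formula, then invoke the Carter criterion \cite[24.9]{James-book} to conclude $S^\lambda\cong D^\lambda$ over $\FF_2$, and finally use Benson's criterion \cite{Benson} to see that the restriction to $\fA_n$ remains irreducible. This combination of the Carter and Benson criteria is the missing idea in your proposal.

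Two minor corrections: $\Aut(\fA_n)\cong\fS_n$ for $n\neq 6$, so $|\Aut(\fA_n)|_p=|\fS_n|_p$, not $2|\fS_n|_p$ (you corrected this yourself for $\fA_7$); and $\fA_7$ is not a group of Lie type in any guise, so the parenthetical about exceptional isogenies is spurious.
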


\begin{proof}
The precise statements in the listed exceptional cases are checked using the
information on Brauer characters provided in tables coming from GAP \cite{GAP}.
If we are not in one of these cases,
\cite[Theorem 2.5]{HTV} (in the corrected version,
including the exception for $\fA_7$ at $p=2$)
tells us that there are possibly only the cases of $S=\fA_n$ with $n\in \{22,24,26\}$
at $p=2$ where the desired inequality might not hold.

For $n=22, 24$ and $26$, we have $|\Aut(S)|_2=2^{19},2^{22}$ and $2^{23}$,
respectively;
in these cases, the 2-Brauer character tables are not available,
and using a similar argument as in \cite{HTV} for finding a
suitable Brauer character
in a 2-block of smallest defect is not strong enough.
So we have to use other methods to find $\phi\in \IBr_2(S)$ such that $\phi(1)_2$ is large.

We consider the Specht modules $S^\lambda$
of $\fS_n$ labelled by
the partitions $(10,7,4,1)$ of $22$,
$(14,7,2,1)$ of $24$, and $(14,7,4,1)$ of $26$;
the 2-powers in the degrees are $2^{13}$, $2^{12}$ and $2^{14}$,
respectively, by the hook formula.
By the Carter criterion \cite[24.9]{James-book},
in all three cases the 2-modular reduction is the
corresponding irreducible module $D^\lambda$.
Restricting these modules to $\fA_n$ gives irreducible modules for $\fA_n$
by Benson's criterion \cite{Benson}.
Hence the 2-powers in the degrees of the corresponding 2-Brauer characters are
sufficiently large, as required.
\end{proof}

\begin{cor} \label{simplepchar}
Let $S$ be a finite non-abelian simple group,
and let $p$ be a prime divisor of $|S|$.
Then there exists $\phi \in \IBr_p(S)$
such that $|\Aut(S)|_p < \phi(1)_p^2$ if $p \geq 5$, $|\Aut(S)|_p < \phi(1)_p^3$ if $p=3$, and $|\Aut(S)|_p < \phi(1)_p^9$ if $p=2$.
\end{cor}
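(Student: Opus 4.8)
The plan is to deduce Corollary~\ref{simplepchar} directly from Theorem~\ref{simplepcharwithspecial} by observing that the only loss incurred comes from the finitely many exceptional triples $(p,S)$ listed there, and that in each of these the required weaker inequality can be verified by hand. Concretely, for $p\ge 5$ the corollary is \emph{identical} to Theorem~\ref{simplepcharwithspecial} (there are no exceptions at $p\ge 5$), so nothing remains to prove. For $p=3$ the only exception is $S=\fA_7$; for $p=2$ the exceptions are $S=M_{22}$ and $S=\fA_7$. So the entire content of the corollary beyond the theorem is the assertion that in these three cases one has $|\Aut(S)|_p<\phi(1)_p^3$ (resp.\ $\phi(1)_p^9$) for a suitable $\phi\in\IBr_p(S)$.

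First I would invoke Theorem~\ref{simplepcharwithspecial}: if $(p,S)$ is not one of the three exceptional pairs, then there is $\phi\in\IBr_p(S)$ with $|\Aut(S)|_p<\phi(1)_p^2$, and since $\phi(1)_p\ge 1$ (indeed $\phi(1)_p\ge p$ whenever $\phi$ is nonlinear, but we only need $\phi(1)_p\ge 1$ together with $\phi(1)_p^2\le \phi(1)_p^3\le\phi(1)_p^9$), the stronger exponents in the corollary hold a fortiori — the inequality $x^2>y$ with $x\ge 1$ gives $x^9\ge x^3\ge x^2>y$. Then I would dispose of the three exceptional cases using the explicit data recorded in the statement of Theorem~\ref{simplepcharwithspecial}: for $p=2$, $S=M_{22}$ we have $|\Aut(S)|_2=2^8$ and $\bar e_2(S)=1$, so taking $\phi$ with $\phi(1)_2=2$ gives $\phi(1)_2^9=2^9>2^8=|\Aut(S)|_2$; for $p=2$, $S=\fA_7$ we have $|\Aut(S)|_2=2^4$ and $\bar e_2(S)=2$, so $\phi(1)_2^9=2^{18}>2^4$ (even $\phi(1)_2^3=2^6>2^4$ suffices); for $p=3$, $S=\fA_7$ we have $|\Aut(S)|_3=3^2$ and $\bar e_3(S)=1$, so $\phi(1)_3^3=3^3>3^2=|\Aut(S)|_3$. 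In each case the relevant $\phi$ exists precisely because $\bar e_p(S)$ is the stated value, i.e.\ there is $\phi\in\IBr_p(S)$ with $\phi(1)_p=p^{\bar e_p(S)}$.

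The only thing to be careful about is bookkeeping: one must check that the exponents chosen in the corollary ($3$ at $p=3$, $9$ at $p=2$) are large enough to absorb \emph{all} exceptions simultaneously, and the computation above shows they are (in fact $3$ already suffices at $p=2$ as well, but $9$ is chosen for later convenience in assembling Theorem~A). There is essentially no obstacle here — this is a clean corollary whose proof is a short case check against the exception list of Theorem~\ref{simplepcharwithspecial}, relying on the explicitly tabulated values of $|\Aut(S)|_p$ and $\bar e_p(S)$ in those cases. If anything, the mild subtlety is simply making sure no exceptional pair was omitted from Theorem~\ref{simplepcharwithspecial}, but that is the content of that theorem (and of \cite[Theorem 2.5]{HTV} in its corrected form), which we are entitled to assume.
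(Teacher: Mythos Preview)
Your argument is correct and is precisely the paper's approach: the paper's proof reads in full ``This is a direct corollary of Theorem~\ref{simplepcharwithspecial},'' and you have simply spelled out the three exceptional verifications. One small slip in your parenthetical commentary: the exponent $3$ does \emph{not} suffice at $p=2$ for $S=M_{22}$, since there $\phi(1)_2=2$ and $|\Aut(S)|_2=2^8$, so one genuinely needs the exponent~$9$; this does not affect your proof, only the aside.
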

\begin{proof}
This is a direct corollary of Theorem ~\ref{simplepcharwithspecial}.
\end{proof}

\begin{hyp} \label{hypothesis}
Let $p$ be a prime and let $N = W_1 \times \cdots \times W_s$ be a normal subgroup of a finite group $G$ with the following assumptions: $\bC_G(N) = 1$; every $W_i$, $1 \leq i \leq s$, is a non-abelian simple group of order divisible by $p$.
\end{hyp}

\begin{lemma}\label{qianinductionchar}
Let  $G$, $N$, $p$ be as in Hypothesis ~\ref{hypothesis}. If there exists $\phi_i \in \IBr_p(W_i)$ such that $|\Aut(W_i)|_p< \phi_i(1)_p^k$ for every $i=1,\dots,s$, then there exists $\phi \in \IBr_p(N)$ such that $|G|_p<\phi(1)_p^k$.
\end{lemma}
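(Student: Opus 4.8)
The plan is to exploit the embedding $G\hookrightarrow\Aut(N)$ coming from $\bC_G(N)=1$, to split the automorphisms of $N$ into the part permuting the simple factors $W_i$ and the part acting on each factor separately, and then to notice that the outer tensor product $\phi=\phi_1\otimes\cdots\otimes\phi_s$ picks up one ``spare'' factor of $p$ from each $W_i$, which together are more than enough to beat the $p$-part of the symmetric group on the $s$ factors.

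\textbf{Step 1 (the permutation action on the factors).} Since each $W_i$ is non-abelian simple, the minimal normal subgroups of $N$ are exactly $W_1,\dots,W_s$, so $G$ (which normalizes $N$) acts by conjugation on the $s$-element set $\{W_1,\dots,W_s\}$. I would let $K=\bigcap_{i=1}^s N_G(W_i)$ be the kernel of this action, so that $G/K$ embeds into $\Sym(s)$. For each $i$, the conjugation action of $K$ on $W_i$ gives a homomorphism $K\to\Aut(W_i)$ with kernel $\bC_K(W_i)$, and $\bigcap_{i}\bC_K(W_i)=\bC_K(N)\le\bC_G(N)=1$, so $K$ embeds into $\prod_{i=1}^s\Aut(W_i)$; hence $|K|_p\le\prod_{i=1}^s|\Aut(W_i)|_p$. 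Combining this with $|\Sym(s)|_p\le p^{s-1}$ (Legendre's formula gives $p$-exponent $(s-\sigma_p(s))/(p-1)\le s-1$, where $\sigma_p(s)\ge 1$ is the base-$p$ digit sum of $s$) yields
\[
|G|_p=|K|_p\cdot|G/K|_p\le p^{s-1}\prod_{i=1}^s|\Aut(W_i)|_p .
\]

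\textbf{Step 2 (choosing and estimating $\phi$).} For each $i$ the hypothesis supplies $\phi_i\in\IBr_p(W_i)$ with $|\Aut(W_i)|_p<\phi_i(1)_p^k$; since both sides are powers of $p$, this already means $\phi_i(1)_p^k\ge p\,|\Aut(W_i)|_p$. I would then take $\phi=\phi_1\otimes\cdots\otimes\phi_s\in\IBr_p(N)$ — recalling that the irreducible $p$-Brauer characters of a direct product of finite groups are precisely the outer tensor products of those of the factors, with degrees multiplying — so that $\phi(1)_p=\prod_{i=1}^s\phi_i(1)_p$. Multiplying the $s$ inequalities then gives
\[
\phi(1)_p^k=\prod_{i=1}^s\phi_i(1)_p^k\ \ge\ p^{s}\prod_{i=1}^s|\Aut(W_i)|_p\ >\ p^{s-1}\prod_{i=1}^s|\Aut(W_i)|_p\ \ge\ |G|_p ,
\]
which is exactly the desired inequality $|G|_p<\phi(1)_p^k$.

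\textbf{Expected obstacle.} The one point needing care is the bookkeeping for the permutation quotient $G/K\le\Sym(s)$: one must check that the $s$ spare factors of $p$ (one from each strict inequality $\phi_i(1)_p^k>|\Aut(W_i)|_p$) outweigh $|\Sym(s)|_p$, and indeed $|\Sym(s)|_p\le p^{s-1}<p^{s}$, so exactly one factor of $p$ is left in hand; in particular no compatibility of the $\phi_i$ across isomorphic factors $W_i$ is required, and the choices provided by the hypothesis can be used verbatim. An essentially equivalent alternative is to first group the $W_i$ into isomorphism classes, write $N\cong\prod_j S_j^{m_j}$, use the structure $\Aut(N)\cong\prod_j\big(\Aut(S_j)\wr\Sym(m_j)\big)$, and run the same arithmetic block by block; this makes the embedding $G\hookrightarrow\Aut(N)$ completely explicit but is otherwise the same argument.
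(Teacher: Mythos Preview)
Your proof is correct and is exactly the standard argument that the paper defers to: the paper's own proof is just the one line ``The proof is the same as \cite[Lemma 2.6]{QIAN}'', and what you wrote is precisely that argument (transported verbatim from ordinary to $p$-Brauer characters). The two steps---embedding $G/K$ into $\Sym(s)$ and $K$ into $\prod_i\Aut(W_i)$ via $\bC_G(N)=1$, then using the spare factor of $p$ from each strict inequality $|\Aut(W_i)|_p<\phi_i(1)_p^k$ to beat $|\Sym(s)|_p\le p^{s-1}$---constitute Qian's proof, so there is nothing to add.
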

\begin{proof}
The proof is the same as ~\cite[Lemma 2.6]{QIAN}.
\end{proof}

\begin{theorem}\label{chardegreegeneralbound}
Let $G$ be a finite group, $p$ be a prime, $P \in \Syl_p(G)$ and $\bar P=P/O_p(G)$;
set $n=\bar e_p(G)$.
We set $k=6.5$ if $p\geq 5$, $k=20$ if $p=3$, and $k=24$ if $p=2$. Then $|G: O_p(G)|_p\leq p^{kn}$, $b(\bar P)\leq p^{{k n}/ 2}$, and $\dl(\bar P) \leq 4 + \log_2 n + \log_2 k$.

\end{theorem}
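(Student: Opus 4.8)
The plan is to reduce the general case to the $p$-solvable case (Theorem~\ref{chardegreepsolvablebound}) plus a contribution coming from the non-abelian composition factors above the solvable radical, handled via Corollary~\ref{simplepchar} and Lemma~\ref{qianinductionchar}. In view of Lemma~\ref{chardegreeside}, it suffices to prove the bound $|G:O_p(G)|_p\le p^{kn}$; the assertions about $b(\bar P)$ and $\dl(\bar P)$ then follow immediately, just as in Theorem~\ref{chardegreepsolvablebound}.

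\textbf{Setup.} Let $T$ be the largest normal solvable subgroup of $G$. Since $O_p(G)\le T$ we have $O_p(T)=O_p(G)$, and since $T\trianglelefteq G$ the quantity $\bar e_p(T)\le\bar e_p(G)=n$. Write $\tilde G=G/T$; this has no nontrivial normal solvable subgroup, so $N:=\bF^*(\tilde G)=\mathrm{soc}(\tilde G)$ is a direct product $W_1\times\cdots\times W_s$ of non-abelian simple groups, with $\bC_{\tilde G}(N)=1$. Put $\bar G=\tilde G/N$. Then
\[
|G:O_p(G)|_p = |T:O_p(G)|_p\cdot |N|_p\cdot |\bar G|_p .
\]
For the middle and right factors we use the ``simple-group'' input: by Corollary~\ref{simplepchar} each $W_i$ carries $\phi_i\in\IBr_p(W_i)$ with $|\Aut(W_i)|_p<\phi_i(1)_p^{k_0}$, where $k_0=2$ for $p\ge5$, $k_0=3$ for $p=3$, $k_0=9$ for $p=2$; since $|N|_p\le|G|_p$ and $\bar G$ embeds into $\prod_i\Out(W_i)$, Lemma~\ref{simplep} and Lemma~\ref{qianinductionchar} (applied to $\tilde G$ with its normal subgroup $N$) produce $\phi\in\IBr_p(N)$ with $|\tilde G|_p < \phi(1)_p^{k_0}$, i.e.\ $|N|_p|\bar G|_p=|\tilde G|_p\le p^{k_0 n}$ once we know this $\phi(1)$ divides a Brauer character degree of $G$ of $p$-part $\le p^n$ (extend $\phi$ through $\tilde G$, noting $\bar e_p(\tilde G)\le n$ as $T\trianglelefteq G$, then inflate). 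For the solvable radical, Theorem~\ref{chardegreepsolvablebound} applied to $T$ gives $|T:O_p(G)|_p\le p^{c n}$ with $c=4.5$ if $p\ge5$, $c=17$ if $p=3$; for $p=2$ one invokes Theorem~\ref{solvableepbound} directly, giving $|T:O_2(G)|_2\le 2^{15n}$.

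\textbf{Conclusion and constants.} Combining, $|G:O_p(G)|_p\le p^{(c+k_0)n}$: for $p\ge5$ this is $p^{(4.5+2)n}=p^{6.5n}$; for $p=3$ it is $p^{(17+3)n}=p^{20n}$; for $p=2$ it is $p^{(15+9)n}=p^{24n}$. These match the stated $k$. Note that for $p$ odd the entire argument runs inside the $p$-solvable machinery only for the radical $T$, while the quotient piece is purely group-theoretic via Lemma~\ref{qianinductionchar}, so nothing new is needed there; for $p=2$ the only change is substituting Theorem~\ref{solvableepbound} and the $k_0=9$ bound of Corollary~\ref{simplepchar}.

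\textbf{Expected main obstacle.} The delicate point is the bookkeeping that a Brauer character degree constructed on $N=\bF^*(G/T)$ (or on $N$ inside $\tilde G$) genuinely lifts to a $\phi\in\IBr_p(G)$ whose degree has $p$-part at least $|\tilde G|_p^{1/k_0}$, so that $\bar e_p(G)\ge n$ actually bounds it --- this is exactly the content packaged into Lemma~\ref{qianinductionchar}, and one must check its hypotheses (faithful action, $p\mid|W_i|$) hold here, which they do by construction of $T$ and $p$-solvability being irrelevant in the general statement. A secondary subtlety for $p=2$ is that one cannot route through Theorem~\ref{chardegreepsolvablebound} (stated for odd $p$); invoking Theorem~\ref{solvableepbound} for the radical is the correct substitute, and one should remark that $G$ itself need not be $2$-solvable, so the simple-group quotient must be handled exactly as for odd $p$ --- the hypotheses of Lemma~\ref{qianinductionchar} make no solvability assumption, so this goes through verbatim.
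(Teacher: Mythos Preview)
Your overall architecture matches the paper's proof exactly: split off a normal subgroup $T$, bound $|T:O_p(G)|_p$ by the $p$-solvable results, and bound $|G/T|_p$ via Lemma~\ref{qianinductionchar} and Corollary~\ref{simplepchar}; then invoke Lemma~\ref{chardegreeside}. The constants you obtain are the paper's constants.

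There is, however, a genuine gap in your choice of $T$. You take $T$ to be the \emph{solvable} radical of $G$, whereas the paper takes $T$ to be the maximal normal \emph{$p$-solvable} subgroup. Your claim that ``$p\mid |W_i|$ holds by construction of $T$'' is false with your choice: if $T$ is merely the solvable radical, the simple factors $W_i$ of $\bF^*(G/T)$ need not have order divisible by~$p$. A concrete obstruction is $W_i\cong \PSL_2(32)$ at $p=5$: here $5\nmid |W_i|$ but $|\Out(W_i)|=5$, so $|\Aut(W_i)|_5=5$, while every $\phi_i\in\IBr_5(W_i)=\Irr(W_i)$ has $\phi_i(1)_5=1$. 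Thus no $\phi_i$ satisfies $|\Aut(W_i)|_p<\phi_i(1)_p^{k_0}$, Hypothesis~\ref{hypothesis} fails, and Lemma~\ref{qianinductionchar} is not applicable.

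The fix is exactly the paper's: let $T$ be the $p$-solvable radical. Then $G/T$ has no nontrivial normal $p$-solvable subgroup, so every simple factor $W_i$ of $\bF^*(G/T)$ has $p\mid |W_i|$ (otherwise the product of those $W_j$ with $p\nmid |W_j|$ would be a nontrivial normal $p'$-subgroup), and Hypothesis~\ref{hypothesis} is satisfied. For odd $p$, Theorem~\ref{chardegreepsolvablebound} still applies to this larger $T$ since $T$ is $p$-solvable. For $p=2$, the $2$-solvable radical coincides with the solvable radical by the Odd Order Theorem, so Theorem~\ref{solvableepbound} applies to $T$ as you intended. With this one-word correction your argument is complete and identical to the paper's.
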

\begin{proof}
Let $T$ be the maximal normal $p$-solvable subgroup of $G$. Since $O_p(G) \leq T$, $O_p(T)=O_p(G)$. Since $T \nor G$, $p^{n+1}$ does not divide $\lambda(1)$, for all $\lambda \in \IBr_p(T)$.

If $p \geq 5$, then $|T: O_p(G)|_p \leq p^{4.5n}$ by Theorem ~\ref{chardegreepsolvablebound}. If $p = 3$, then $|T: O_p(G)|_p \leq p^{17n}$ by Theorem ~\ref{chardegreepsolvablebound}. If $p =2$, then $|T: O_p(G)|_p \leq p^{15n}$ by Theorem ~\ref{solvableepbound}.

We now consider $\bar G=G/T$, we know that $\bF^*(\bar G)$ is a direct product of non-abelian simple groups, where $p$ divides the order of each of them.

Since $\bar G$ and $\bF^*(\bar G)$ satisfy Hypothesis ~\ref{hypothesis}, by Lemma ~\ref{qianinductionchar} and Corollary ~\ref{simplepchar}, we have that $|\bar G|_p \leq p^{2n}$ if $p \geq 5$, $|\bar G|_p \leq p^{3n}$ if $p=3$, and $|\bar G|_p \leq p^{9n}$ if $p=2$.

Thus, we have,
\begin{enumerate}
\item $|G:O_p(G)|_p \leq |G:T|_p |T: O_p(G)|_p  \leq p^{6.5 n}$ if $p \geq 5$.
\item $|G:O_p(G)|_p \leq |G:T|_p |T: O_p(G)|_p  \leq p^{20 n}$ if $p=3$.
\item $|G:O_p(G)|_p \leq |G:T|_p |T: O_p(G)|_p  \leq p^{24 n}$ if $p=2$.
\end{enumerate}

The bounds for $b(\bar P)$ and $\dl(\bar P)$ follow from Lemma ~\ref{chardegreeside}.
\end{proof}

\section{On $p$-parts of $p$-regular conjugacy class sizes} \label{conj p part of G/F(G)}

We now start to prove results related to the $p$-parts of $p$-regular conjugacy class sizes.

With respect to the $p$-regular class size version of the problem, we make the following observations. We will use the following results very often in the proofs so we state them here.

\begin{lemma}\label{lem1}
Let $N$ be a normal subgroup of $G$. Then
\begin{enumerate}
\item If $x \in N$, $|x^N|$ divides $|x^G|$.
\item If $x \in G$, $|(xN)^{G/N}|$ divides $|x^G|$.
\end{enumerate}
\end{lemma}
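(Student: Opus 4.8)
The plan is to reduce both statements to elementary index computations with centralizers, using the orbit–counting identity $|x^G| = |G : \bC_G(x)|$ (and likewise in $N$ and in $G/N$), together with the observation that $\bC_N(x) = \bC_G(x)\cap N$.

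For part (1), I would start from $x\in N$ and write
\[
\frac{|x^G|}{|x^N|} = \frac{|G:\bC_G(x)|}{|N:\bC_G(x)\cap N|} = \frac{|G:N|}{|\bC_G(x):\bC_G(x)\cap N|}.
\]
By the second isomorphism theorem $|\bC_G(x):\bC_G(x)\cap N| = |\bC_G(x)N : N|$, which divides $|G:N|$ since $\bC_G(x)N/N \le G/N$. Hence the displayed ratio is a positive integer, i.e.\ $|x^N|$ divides $|x^G|$. (Equivalently, one can argue that $x^G\subseteq N$ splits into $N$-classes all of size $|x^N|$ that are permuted by $G$, so $|x^N|\mid|x^G|$; I would mention this as the conceptual reason but carry out the centralizer computation for brevity.)

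For part (2), let $\pi\colon G\to G/N$ be the canonical projection. Conjugation is compatible with $\pi$, so $\pi(x^G) = (xN)^{G/N}$ and $\bC_{G/N}(xN) \supseteq \bC_G(x)N/N$. Therefore
\[
|(xN)^{G/N}| = |G/N : \bC_{G/N}(xN)| \ \text{ divides }\ |G/N : \bC_G(x)N/N| = |G : \bC_G(x)N|,
\]
and the latter divides $|G:\bC_G(x)| = |x^G|$. Combining the two divisibilities gives $|(xN)^{G/N}|\mid|x^G|$.

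There is no real obstacle here; the only point requiring a moment's care is checking the containment $\bC_{G/N}(xN)\supseteq \bC_G(x)N/N$ and the compatibility $\pi(x^G)=(xN)^{G/N}$, both of which are immediate from $\pi$ being a homomorphism. Everything else is a routine application of Lagrange's theorem and the isomorphism theorems.
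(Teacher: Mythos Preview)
Your proof is correct. The paper states this lemma without proof, treating it as a standard elementary fact; your centralizer-index argument is exactly the routine verification one would expect, and there is nothing to compare against.
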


\begin{remark}
We first observe that the condition $p^k$ does not divide $|x^G|$ for every $p$-regular element $x \in G$ is inherited by all the normal subgroups of $G$ and all the quotient groups of $G$. Since the normal subgroups case easily follows from Lemma ~\ref{lem1}(1), we will just explain for the quotient groups. Let $N \nor G$, and $T$ be a $p$-regular class of $G/N$ £¬then we have a $p$-regular element $xN \in G/N$ such that $T= (xN)^{G/N}$. We may write $x= yz$, where $y$ is a $p'$-element, $z$ is a $p$-element and $yz=zy$. Let $H=\langle x \rangle N$, we know that $|H/N|$ is a $p'$ number, and thus $z\in N$. We have $xN=yN$, and $T=(yN)^{G/N}$. We have that $|T| \mid |y^G|$ and the result follows.
\end{remark}

\begin{theorem}\label{conjugacyboundsolvable}
Let $G$ be a solvable group with $O_p(G) = 1$, and let $P\in \Syl_p(G)$. Set $n=\overline{ecl}_p(G)$. Then $|G|_p \leq p^{15n}$ if $p=2$ or $p=3$. In particular, $e_p(G) \leq 15 n$, $b(P) \leq p^{7.5n}$, and $\dl(P)$ is bounded by a logarithmic function of $n$.
\end{theorem}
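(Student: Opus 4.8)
The plan is to mirror the structure of the proof of Theorem~\ref{solvableepbound} (the $p$-Brauer character degree analogue), replacing the character-theoretic orbit machinery with its conjugacy-class counterpart. First I would apply Gasch\"utz's theorem: $\bar G = G/\bF(G)$ acts faithfully and completely reducibly on $V=\Irr(\bF(G)/\Phi(G))$, and since $p\nmid|\bF(G)/\Phi(G)|$ we may identify $\Irr$ with $\IBr$ on this section. The key input is an orbit theorem (the class-size analogue of \cite[Theorem 3.3]{YY1}): there is a $\bar G$-orbit on a suitable faithful completely reducible module whose point stabilizer lies inside $\bF_8(G)$ (up to the usual $p$-part considerations). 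The arithmetic translation from characters to classes is that if $v$ lies in such an orbit, then $|\bar G:\bC_{\bar G}(v)|$ — which is the relevant orbit size and divides a $p$-regular class size of $G$ once we lift $v$ to an element of $\bF(G)/\Phi(G)$ of $p'$-order — carries the $p$-part $|\bar G/T|_p$, so $|G:T|_p \le p^n$.

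Next I would bound the Sylow $p$-subgroups of the successive Fitting quotients. Writing $K_{i+1}=\bF_{i+1}(G)/\bF_i(G)$ with Sylow $p$-subgroup $K_{i+1,p}$, this group acts faithfully and completely reducibly on $K_i/\Phi(G/\bF_{i-1}(G))$, which splits as a $p$-part plus a $p'$-part $V_{i2}$. On the coprime piece $V_{i2}$, Isaacs' Lemma~\ref{coprimeaction} gives an element $x$ (of $p'$-order, hence $p$-regular) with $|\bC_{K_{i+1,p}}(x)|\le |K_{i+1,p}|^{1/2}$, so the orbit of $x$ has size at least $|K_{i+1,p}|^{1/2}$; since this orbit size divides a $p$-regular class size of $G$, we get $|K_{i+1,p}|^{1/2}\le p^n$, i.e. $|K_{i+1,p}|\le p^{2n}$. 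For the $p$-part $V_{i1}$ of the module one argues as in the character case that a nontrivial $p'$-element of the corresponding abelian $p$-section is moved, yielding $|V_{i1}|\le p^n$; combined over $2\le i\le 8$ this controls $|\bF_8(G)/\bF(G)|_p$, and together with $\bF(G)$ being a $p'$-group (as $O_p(G)=1$... here one needs $O_p(\bF(G))=1$) and $|G:T|_p\le p^n$ where $T=\bF_8(G)$, a bookkeeping sum of the form $n + 7\cdot 2n = 15n$ gives $|G|_p \le p^{15n}$.

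The consequences are then routine: $e_p(G)\le 15n$ because the $p$-part of any ordinary irreducible degree is at most $|G|_p^{1/2}$... more precisely one observes $p^{e_p(G)}\mid |G:O_p(G)|_p$ is not automatic, so instead I would note that $e_p(G)\le \log_p|G|_p \le 15n$ directly (or invoke $\bar e_p(G)\le \overline{ecl}_p(G)$-type comparisons if available); $b(P)\le |P|^{1/2}\le p^{7.5n}$ since $P$ is nilpotent; and $\dl(P)$ is logarithmic in $n$ by exactly the argument of Lemma~\ref{chardegreeside} (Podoski's theorem plus \cite[Satz III.2.12]{Huppert1}), since $b(P)=p^s$ with $s\le 7.5n$ gives $\dl(P)\le 5+\log_2 s \le 4+\log_2 n + O(1)$.

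The main obstacle I anticipate is establishing the correct conjugacy-class analogue of the orbit theorem \cite[Theorem 3.3]{YY1} — i.e. producing a $p'$-element whose centralizer (hence $p$-regular class size) sees the full $p$-part $|\bar G/\bF_8(G)|_p$ — because the class-size side does not have the clean "character lying over $\lambda$" mechanism; one must instead exhibit an actual $p$-regular group element in $\bF(G)$ (realizing the linear character $\lambda$ via the standard $\Irr/\text{element}$ correspondence on the abelian $p'$-section $\bF(G)/\Phi(G)$) and verify $C_G(x)$ meets $P$ only inside $T$. The completely-reducible-module reductions and the final numerical assembly are straightforward once that step is in place; the deeper the Fitting height, the more care is needed that each $V_{i2}$ really yields a genuine $p$-regular element of $G$ whose class size is divisible by the relevant orbit size, which is where Lemma~\ref{lem1} and the preceding remark on inheritance of the class-size condition do the work.
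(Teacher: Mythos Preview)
Your proposal is correct and follows essentially the same route as the paper. The obstacle you anticipate does not arise: \cite[Theorem 3.3]{YY1} is a statement about group actions on modules, not specifically on characters, and the paper applies it directly to the action of $G/\bF(G)$ on the elements of the $p'$-group $\bF(G)/\Phi(G)$, which are automatically $p$-regular; no separate ``class-size analogue'' of the orbit theorem and no delicate lifting are needed, since $|x^{G/\Phi(G)}|$ is already a $p$-regular class size of a quotient and the inheritance remark after Lemma~\ref{lem1} gives $|G:T|_p\le p^n$ immediately. Your aside about bounding $|V_{i1}|$ is likewise unnecessary --- what is needed is only that $K_{i+1,p}$ acts faithfully on $V_{i2}$, which holds because a $p$-group acts trivially on any completely reducible module in characteristic~$p$; the paper's count $p^n\cdot (p^{2n})^7=p^{15n}$ then matches yours.
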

\begin{proof}
By Gasch\"utz's theorem, $G/\bF(G)$ acts faithfully and completely reducibly on $\bF(G)/\Phi(G)$. Since $p \nmid |\bF(G)/\Phi(G)|$, every element in $\bF(G)/\Phi(G)$ is a $p'$-element. It follows from ~\cite[Theorem 3.3]{YY1} that there exists $x \in \bF(G)/\Phi(G)$ such that $T = \bC_{G}(x) \leq \bF_8(G)$.

Let $K_{i+1}=\bF_{i+1}(G)/\bF_i(G)$ and let $K_{i+1, p}$ be the Sylow $p$-subgroup of $K_{i+1}$ for all $i \geq 1$.
We know that $K_{i+1, p}$ acts faithfully and completely reducibly on $K_i/\Phi(G/\bF_{i-1}(G))$. It is clear that we may write $K_i/\Phi(G/\bF_{i-1}(G))=V_{i1}+V_{i2}$ where $V_{i1}$ is the $p$-part of $K_i/\Phi(G/\bF_{i-1}(G))$ and $V_{i2}$ is the $p'$-part of $K_i/\Phi(G/\bF_{i-1}(G))$ for all $i \geq 1$.

We observe that $K_{i+1, p}$ acts faithfully and completely reducibly on $V_{i2}$ for all $i \geq 1$. Since $p \nmid |V_{i2}|$, every element in $V_{i2}$ is a $p'$-element. We have $|K_{i+1, p}| \leq p^{2n}$ by Lemma ~\ref{coprimeaction}.

Next, we show that $|G : T|_{p} \leq p^n$.


We now consider $|x^G|$; clearly $|G : T|_p$ divides $|x^G|$, hence is at most $p^n$. 

We know from before that $|K_{i,p}| \leq p^{2n}$ for $2 \leq i \leq 8$. This implies that $|G|_p \leq (p^{2n})^7 \cdot p^n = p^{15 n}$.
\end{proof}

\begin{theorem}\label{conjugacyboundsolvablep5}
Let $G$ be a solvable group with $O_p(G) = 1$ where $p \geq 5$ is a prime, and let
$P\in \Syl_p(G)$; set $n=\overline{ecl}_p(G)$. Then $|G|_p  \leq p^{2.5 n}$. In particular, $e_p(G) \leq 2.5 n$, $b(P) \leq p^{1.25n}$, and $\dl(P)$ is bounded by a logarithmic function of $n$.
\end{theorem}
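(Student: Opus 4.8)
The plan is to run the argument for Theorem~\ref{solvableepboundp5} essentially verbatim, with the height-zero Brauer character there replaced by a \emph{defect class}. The point is that a $p$-block $B$ of defect $d$ of a finite group $G$, with $|G|_p=p^a$, not only contains a (Brauer) character of degree divisible by $p^{a-d}$, but also possesses a defect class: a conjugacy class $K$ of $p$-regular elements such that a Sylow $p$-subgroup of $\bC_G(x)$ is a defect group of $B$ for $x\in K$ (see \cite[Section~15]{Isaacs/book}). For such $K$ one has $|\bC_G(x)|_p=p^{d}$, hence $|K|_p=|G:\bC_G(x)|_p=p^{a-d}$, and since $|K|\in\clsize_{p'}(G)$ this gives $\overline{ecl}_p(G)\ge a-d$ --- the exact conjugacy-class analogue of the inequality $a-d\le \bar e_p(G)$ used in the character case.

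With this in hand the proof is immediate. Write $|G|_p=p^a$. Since $G$ is solvable with $O_p(G)=1$ and $p\ge 5$, the result of \cite{YY5} invoked in Theorem~\ref{solvableepboundp5} provides a $p$-block of $G$ of defect $d\le \tfrac 35 a$. Taking a defect class of that block and applying the observation above yields $n=\overline{ecl}_p(G)\ge a-d\ge a-\tfrac 35 a=\tfrac 25 a$, so $a\le \tfrac 52 n$, i.e.\ $|G|_p\le p^{2.5n}$. For the ``in particular'' part: every $\chi\in\Irr(G)$ satisfies $\chi(1)\mid |G|$, so $\chi(1)_p\le |G|_p\le p^{2.5n}$ and hence $e_p(G)\le 2.5n$; since $P$ is a $p$-group, $b(P)^2\le\sum_{\chi\in\Irr(P)}\chi(1)^2=|P|=p^a$, so $b(P)\le p^{a/2}\le p^{1.25n}$; and $\dl(P)\le 5+\log_2(1.25n)$, a logarithmic function of $n$, follows from $b(P)\le p^{1.25n}$ by exactly the chain of estimates in Lemma~\ref{chardegreeside} (via \cite[Theorem~12.26]{Isaacs/book}, \cite[Theorem~5.1]{Podoski} and \cite[Satz~III.2.12]{Huppert1}).

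The only step requiring any thought is the first one: recognizing the defect class as the right conjugacy-class substitute for a height-zero character, and checking it can be chosen $p$-regular (which is part of the standard block-theoretic development, hence the citation to \cite[Section~15]{Isaacs/book}). If one wished to avoid block theory, the alternative would be to mimic the (currently commented-out) orbit-theoretic proof of Theorem~\ref{solvableepboundp5}: apply \cite[Theorem~3.3]{YY5} to the faithful completely reducible $p'$-module $\bF(G)/\Phi(G)$ for $G/\bF(G)$, lift the orbit representative to a $p$-regular element of $\bF(G)$, and combine the resulting index estimate with Lemma~\ref{coprimeaction} (the square-root bound of Isaacs) to bound the $p$-part of its centralizer; this reproduces the exponent $2.5$ but is considerably longer, so the block-theoretic route is preferable.
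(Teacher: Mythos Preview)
Your proof is correct and essentially identical to the paper's: both invoke the small-defect block from \cite{YY5} and then use a $p$-regular defect class (via \cite[Section~15]{Isaacs/book}) to conclude $a-d\le n$, hence $a\le \tfrac52 n$. Your additional derivation of the ``in particular'' consequences is fine and simply makes explicit what the paper leaves to the reader.
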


\begin{proof}
Let $|G|_p=p^a$. By \cite{YY5}, the group $G$ has a $p$-block of defect $d\le \frac 35 a$.
Now $G$ has a $p$-regular element $x\in G$ such that $|C_G(x)|_p=p^d$
(see \cite[Section 15]{Isaacs/book}).
Hence $|x^G|_p=p^{a-d}$, which implies that $a-d \le n$, and thus
$a\le \frac 52 n$.
\end{proof}

We now state the class size version of Theorem ~\ref{charorbit}.

\begin{theorem} \label{classorbit}
Let $V\trianglelefteq G$, where $G/V$ is $p$-solvable for an odd prime $p$,
and  $V$ is a direct product of isomorphic non-abelian simple groups $S_1, \ldots, S_n$.
Suppose that $G$ acts transitively on the groups
$S_1, \ldots, S_n$, and write $O=\bigcap_k N_G(S_k)$.
Then there exist nonidentity $v_1$, $v_2$ and  $v_3  \in \cl(V)$
of different sizes such that all Sylow $p$-subgroups of $\bC_G(v_j)$
are contained in $O$ for all $j=1,2,3$.
\end{theorem}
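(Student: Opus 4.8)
The statement is the conjugacy-class analogue of Theorem~\ref{charorbit}, and I would prove it by imitating that proof almost verbatim, replacing irreducible characters by conjugacy classes throughout. The only structural inputs that change are: instead of Lemma~\ref{chardegsimple} (a nonabelian simple group has at least $4$ irreducible character degrees) I would use Lemma~\ref{classsimple} ($|cs(S)|\ge 4$); and instead of an inertia-group argument I would use the fact that for $v\in \cl(V)$ with $V\trianglelefteq G$, the centralizer $\bC_G(v)$ behaves well with respect to direct product decompositions. The key starting observation is: if $v\in\cl(V_i)$ viewed inside $\cl(V)$ (writing $v=(v;1,\dots,1)$ in the factor $V_i$ of $V=V_1\times\cdots\times V_m$), and $v$ is nonidentity, then any element of $G$ centralizing (the $G$-class of) $v$ must stabilize the block $\Delta_i$, because $G$ permutes the $V_j$ transitively and moving $V_i$ would move $v$ to a different $V$-class. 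Hence $\bC_G(v)\le J_i=N_G(V_i)$ and $\bC_G(v)=\bC_{J_i}(v)$, exactly parallel to the character case.

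\textbf{Key steps.} First, dispose of $n=1$ using Lemma~\ref{classsimple}. For $n>1$, set up the system of imprimitivity $(\Delta_1,\dots,\Delta_m)$ with maximal block size $b$, the groups $J_i=N_G(V_i)$, $K=\bigcap_i J_i$, $O_i=\bigcap_{S_t\in\Delta_i}N_G(S_t)$, and $O=\bigcap_i O_i$, all exactly as in Theorem~\ref{charorbit}; note $G/K$ is a $p$-solvable primitive group of degree $m$ on $\Omega=\{\Delta_1,\dots,\Delta_m\}$. By induction on $n$ applied to $J_i<G$ acting on $V_i$ (here one uses that $V_i$ is a direct product of $b<n$ isomorphic nonabelian simple groups on which $J_i$ acts transitively, and $J_i/V_i$ is $p$-solvable as a subgroup of $G/V$ times $\ldots$—more precisely $J_i/(J_i\cap V)$ embeds in $G/V$ so is $p$-solvable), obtain nonidentity $\theta_i,\lambda_i,\chi_i\in\cl(V_i)$ of pairwise different sizes with all Sylow $p$-subgroups of $\bC_{J_i}(\theta_i),\bC_{J_i}(\lambda_i),\bC_{J_i}(\chi_i)$ contained in $O_i$, chosen $G$-conjugately across the $i$, and ordered $|\theta_i|>|\lambda_i|>|\chi_i|$. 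Second, invoke the combinatorial claim from the proof of Theorem~\ref{charorbit} verbatim: either $G/K$ has a partition $\Omega=\Omega_1\sqcup\Omega_2$ with $\Stab_{G/K}(\Omega_1)\cap\Stab_{G/K}(\Omega_2)$ a $p'$-group, or we are in one of the two exceptional configurations ($|\Omega|=8$ with $\mathrm{A\Gamma L}(1,8)$, or $|\Omega|=9$ with $\mathrm{AGL}(2,3)$ or $\mathrm{ASL}(2,3)$), handled by a three-part partition. Third, in the generic case set
\[
v_1=\Bigl(\prod_{i=1}^s\theta_i\Bigr)\Bigl(\prod_{i=s+1}^m\lambda_i\Bigr),\quad
v_2=\Bigl(\prod_{i=1}^s\theta_i\Bigr)\Bigl(\prod_{i=s+1}^m\chi_i\Bigr),\quad
v_3=\Bigl(\prod_{i=1}^s\lambda_i\Bigr)\Bigl(\prod_{i=s+1}^m\chi_i\Bigr)
\]
as elements of $\cl(V)=\prod_i\cl(V_i)$; the sizes $|v_j|=\prod_i(\text{size of the }i\text{-th component})$ are visibly distinct since they are products of the form $\prod|\theta|^{a}|\lambda|^{b}|\chi|^{c}$ with the multiplicities $(a,b,c)$ differing. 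Then $\bC_G(v_1)\le\Stab_G(\Omega_1)\cap\Stab_G(\Omega_2)$, so a Sylow $p$-subgroup $P$ of $\bC_G(v_1)$ satisfies $P\le \bC_K(v_1)\cap P$; since the $V_i$ are normal in $K$ we get $\bC_K(v_1)=\bigl(\bigcap_{i\le s}\bC_K(\theta_i)\bigr)\cap\bigl(\bigcap_{i>s}\bC_K(\lambda_i)\bigr)$, whence $P\le\bigcap_i O_i=O$, and likewise for $v_2,v_3$. The two exceptional cases are treated by the same three-block bookkeeping already written out in the proof of Theorem~\ref{charorbit}.

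\textbf{Main obstacle.} The one genuinely non-cosmetic point is verifying that $\bC_G(v)\le J_i$ when $v$ lives in the single factor $V_i$: for characters this is immediate because an irreducible character of $V$ supported (nonprincipally) on $V_i$ has a well-defined "support block", whereas for conjugacy classes one must argue with the class $v^V=v_1^{V_1}\times\cdots$ as a subset of $V$ and observe that $g\in G$ normalizes $V$ and permutes the factors, so $g$ sends the $V$-class of $v$ to the $V$-class of $v^g$, which lies in $V_{i^g}$; equality of these classes forces $i^g=i$ because the factor in which a nonidentity "localized" element lies is a $V$-conjugacy invariant. Once this is in place, every remaining step is a transliteration of the character proof, with "different degrees'' replaced by "different sizes'' and $\Irr$ replaced by $\cl$, and with Lemma~\ref{classsimple} supplying the base case $n=1$. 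I would also remark that the sizes of $v_1,v_2,v_3$ are distinct essentially because $|\theta_i|,|\lambda_i|,|\chi_i|$ are (by induction) distinct and the three products use them with different exponent patterns across the blocks — this is the analogue of "$v_1,v_2,v_3$ have different degrees'' and deserves one sentence of justification rather than being asserted.
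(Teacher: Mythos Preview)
Your proposal is correct and takes essentially the same approach as the paper: the paper's entire proof of Theorem~\ref{classorbit} is the single sentence ``The proof is similar to the proof of Theorem~\ref{charorbit} but using Lemma~\ref{classsimple} instead of Lemma~\ref{chardegsimple}.'' You have spelled out precisely this transliteration, and your identification of the one non-cosmetic point (why $\bC_G(v)\le J_i$ for a nonidentity class supported in $V_i$) together with your justification is exactly what is needed to make the analogy rigorous. One small remark: for the induction applied to $J_i$ acting on $V_i$, the hypothesis requires $J_i/V_i$ to be $p$-solvable, which you hedge on; the clean fix is to mod out the complement $V_i'=\prod_{j\ne i}V_j$ (normal in $J_i$ since $J_i$ normalizes both $V$ and $V_i$), so that $(J_i/V_i')/(V/V_i')\cong J_i/V\le G/V$ is $p$-solvable, and then pull back---this is an imprecision shared with the paper's proof of Theorem~\ref{charorbit}.
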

\begin{proof}
The proof is similar to the proof of Theorem ~\ref{charorbit} but using Lemma ~\ref{classsimple} instead of Lemma ~\ref{chardegsimple}.
\end{proof}

We now prove the conjugacy class analogues of Theorem ~\ref{charppart} and Theorem ~\ref{chardegreepsolvablebound}.

\begin{theorem}\label{classppart}
Let $G$ be a $p$-solvable group for an odd prime $p$.
Assume that $G$ has no nontrivial solvable normal subgroup.
Then there exists $C \in \cl_{p'}(G)$ such that $|C|_p \geq \sqrt{|G|_p}$.
\end{theorem}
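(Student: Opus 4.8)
The plan is to run the proof of Theorem~\ref{charppart} line by line, replacing irreducible characters of $L$ by conjugacy classes of $L$, inertia groups by centralizers and setwise class stabilizers, $\IBr_p$-degrees by class sizes, and Theorem~\ref{charorbit} by its class counterpart Theorem~\ref{classorbit}. As there, since $G$ has no nontrivial solvable normal subgroup its socle is $L=L_1\times\cdots\times L_n$ with each $L_i=S_{i1}\times\cdots\times S_{it_i}$ minimal normal in $G$ and the $S_{ij}$ isomorphic to a non-abelian simple group; since $G$ is $p$-solvable, $p\nmid|L|$, so every element of $L$ is $p$-regular. Setting $O_i=\bigcap_j N_G(S_{ij})$ and $O=\bigcap_i O_i$, the Dedekind-law computation from the proof of Theorem~\ref{charppart} gives $O/L\lesssim\prod_{i,j}\Out(S_{ij})$, which has a normal abelian Sylow $p$-subgroup because each $\Out(S_{ij})$ has cyclic Sylow $p$-subgroup.

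Next I would produce two $p$-regular classes. For the first, I would use the conjugacy-class analogue of Lemma~\ref{simplecoprime}: a group acting faithfully and coprimely on a non-abelian simple group has a regular orbit on its conjugacy classes (this follows from Lemma~\ref{simplecoprime} together with Brauer's permutation lemma, since a coprime action has equally many regular orbits on $\Irr(S)$ and on $\cl(S)$). Arguing as in the proof of Theorem~\ref{charppart}, one finds $x_1\in L$ such that $\bC_O(x_1)$ is a $p'$-group; since $O\trianglelefteq G$, $|O:\bC_O(x_1)|=|O:O\cap\bC_G(x_1)|$ divides $|x_1^G|$, so $|x_1^G|_p\geq|O|_p$. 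For the second, I would apply Theorem~\ref{classorbit} to each homogeneous component $L_i$ (on whose simple factors $G$ acts transitively) to obtain a nonidentity class $v_i\in\cl(L_i)$ such that every Sylow $p$-subgroup of $\bC_G(v_i)$ is contained in $O_i$; taking $v=\prod_i v_i\in\cl(L)$ with representative $x_2\in L$ and using that the $L_i$ are normal in $G$ (so $\bC_G(x_2)=\bigcap_i\bC_G(x_2^{(i)})$ for the $L_i$-components $x_2^{(i)}$ of $x_2$), every Sylow $p$-subgroup of $\bC_G(x_2)$ lies in $\bigcap_i O_i=O$, hence $|x_2^G|_p\geq|G:O|_p$. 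Since $x_1,x_2\in L$ are $p$-regular and $\max(|x_1^G|_p,|x_2^G|_p)\geq\sqrt{|O|_p\,|G:O|_p}=\sqrt{|G|_p}$, one of these classes has the desired property.

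The only external input is the regular-orbit statement for conjugacy classes, i.e., the class version of Lemma~\ref{simplecoprime}; this is the sole point I expect to require care. The remaining work is the bookkeeping of translating the inertia-group arguments of the proof of Theorem~\ref{charppart} into centralizers and setwise class stabilizers — in particular using that $\Stab_G(x^L)=\bC_G(x)L$ with $L$ a $p'$-group, so that $p$-parts are unaffected by passing between the centralizer and the class stabilizer, and the divisibility $|O:O\cap\bC_G(x)|\mid|x^G|$ for $O\trianglelefteq G$, which replaces the intersection of inertia groups used before. I do not expect any genuinely new difficulty.
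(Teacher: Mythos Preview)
Your proposal is correct and follows exactly the route the paper takes: the paper's proof is a one-line remark that the argument is ``similar to the proof of Theorem~\ref{charppart} but using Theorem~\ref{classorbit} instead of Theorem~\ref{charorbit}'', and you have filled in precisely those translations. Your identification of the class analogue of Lemma~\ref{simplecoprime} as the one point needing care is apt; note that in the application the acting group is the Sylow $p$-subgroup of $O/L$, which is abelian (it embeds in a product of cyclic groups), so Brauer's permutation lemma does yield permutation-isomorphic actions on $\Irr(S)$ and $\cl(S)$, and the regular orbit transfers as you claim.
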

\begin{proof}
The proof is similar to the proof of Theorem ~\ref{charppart} but using Theorem ~\ref{classorbit} instead of Theorem ~\ref{charorbit}.
\end{proof}

\begin{lemma} \label{simplepconjugacy}
Let $S$ be a finite non-abelian simple group and $p \geq 3$ be a prime divisor of $|S|$, then there exists $C \in \cl_{p'}(S)$ such that $|\Aut(S)|_p<|C|_p^2$.
\end{lemma}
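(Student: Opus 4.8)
The plan is to mirror the strategy used for Brauer characters in Theorem~\ref{simplepcharwithspecial}, but with a conjugacy-class version of the "$p$-defect zero" input. For $S$ of Lie type in its defining or non-defining characteristic, or alternating, I would look for a $p$-regular class $C$ whose centralizer has small $p$-part; the natural analogue of a $p$-defect-zero character is a $p$-regular element $x$ with $|C_S(x)|_p$ as small as possible, so that $|x^S|_p = |S|_p / |C_S(x)|_p$ is large. Combined with Lemma~\ref{simplep}, which gives $|\Out(S)|_p < |S|_p$, it suffices in most cases to produce $x \in S$ $p$-regular with $|C_S(x)|_p = 1$, i.e. lying in a class of "$p$-defect zero"; then $|C|_p^2 = |S|_p^2 > |S|_p \cdot |\Out(S)|_p = |\Aut(S)|_p$.

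First I would dispose of the groups of Lie type: here one can exhibit a regular semisimple (hence $p$-regular if $p$ is non-defining, or a suitable unipotent-free) element whose centralizer is a maximal torus of order coprime to $p$, or use the known fact that $p'$-classes of defect zero exist (the class-size analogue of the Michler/Willems results). For $p$ the defining characteristic of $S$, the regular unipotent element is replaced by a regular semisimple element, and the centralizer is a torus, which is a $p'$-group, so $|C|_p = |S|_p$ directly. For alternating groups $S=\fA_n$ with $p \ge 3$, I would use that $|\Aut(S)|_p = |S|_p$ (as $|\Out(\fA_n)|$ is a $2$-power for $n \ne 6$, and for $n=6$ we assume $p\ge 3$ so again $|\Out|_p=1$), and then invoke the Ito-type result (Lemma~\ref{classsimple} gives enough classes, but for the $p$-part one wants a class-size analogue of the $p$-defect-zero statement for $\fA_n$, which follows from the analysis of $p$-regular classes via partitions with empty $p$-core — for $p\ge3$ and $n$ large enough such a $p$-regular element with centralizer of trivial $p$-part exists). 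The sporadic groups and the small alternating groups are then finitely many cases to be checked from the ATLAS data, noting $|\Out(S)| \le 2$ for sporadics so that $|C|_p = |S|_p$ already suffices whenever a $p'$-class of defect zero exists, and otherwise exhibiting an explicit $p$-regular class of large $p$-part (the class-size analogues of the characters used in the proof of Theorem~\ref{simplepcharwithspecial}).

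The main obstacle I anticipate is the same place the character version had trouble: the sporadic and small-rank exceptional cases where no $p'$-class of $p$-defect zero exists, which requires either explicit ATLAS/GAP computation of $p$-regular class sizes, or an argument producing a $p$-regular element whose centralizer has $p$-part at most $|S|_p^{1/2} / |\Out(S)|_p^{1/2}$ — weaker than defect zero but still enough for the inequality $|\Aut(S)|_p < |C|_p^2$. Since the statement is only for $p \ge 3$ and the hardest exceptional phenomena in Theorem~\ref{simplepcharwithspecial} occurred at $p=2$ (e.g. $M_{22}$, $\fA_7$), I expect that for $p\ge 3$ no exceptions survive, and the finitely many remaining checks (the sporadics with no $3$-defect-zero character, namely $Suz$ and $Co_3$, plus $S=\fA_7$ at $p=3$ where one must verify $|C|_3 = 3 < 3^2 = |S|_3$ fails — so here the class-size version must be checked to genuinely work, unlike the Brauer character version) are a routine but essential computation.
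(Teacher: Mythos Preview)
Your outline matches the paper's proof: invoke the existence of a $p$-block of defect zero (giving a $p$-regular $x$ with $|C_S(x)|_p=1$, hence $|x^S|_p=|S|_p$) for Lie type groups at any $p$ and for $\fA_n$ at $p\ge 5$, combine with Lemma~\ref{simplep}, and check the sporadics via the ATLAS.

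The only substantive step beyond these citations is $\fA_n$ at $p=3$, and here your proposal is not yet a proof. Not every $\fA_n$ has a $3$-block of defect zero, and ``partitions with empty $p$-core'' is not the relevant criterion for a $p$-regular class with $p'$-centralizer (the centralizer of an element of cycle type $\lambda$ has order $\prod_i i^{m_i}m_i!$, so one needs all parts coprime to $3$ and all multiplicities $<3$, which is unrelated to the $p$-core). The paper fills this gap with an explicit elementary construction: for $n$ odd take an $n$-cycle if $3\nmid n$ and an $(n-2)$-cycle if $3\mid n$; for $n$ even take an $(n-1)$-cycle if $3\nmid n-1$ and an $(n-3)$-cycle otherwise. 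Each element is visibly $3$-regular, and a direct computation of its centralizer order in $\fA_n$ (which is $n$, $n-2$, $n-1$, or $3(n-3)$ in the four cases) verifies the inequality, using also that $|\Out(\fA_n)|_3=1$. In particular your worry about $\fA_7$ evaporates: the $7$-cycle is $3$-regular with centralizer of order $7$, so its class $C$ has $|C|_3=|\fA_7|_3=9$ and $|C|_3^2=81>9=|\Aut(\fA_7)|_3$; the asymmetry with Theorem~\ref{simplepcharwithspecial} is exactly that a $3$-regular class of defect zero exists in $\fA_7$ even though no irreducible $3$-Brauer character of defect zero does.
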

\begin{proof}
For the simple groups of Lie type and any prime $p$, or the alternating groups and $p \geq 5$, there is always a $p$-block of defect $0$.
Hence there is a $p$-regular element $x\in G$ such that
$|C_G(x)|_p=1$, and thus $|x^G|_p=|G|_p$.
Then the result follows from Lemma ~\ref{simplep}.

Thus one only needs to consider the alternating groups and $p=3$.

 First assume that $n$ is odd. If $\alpha$ is an $n$-cycle, then $\alpha \in \fA_n$ and  $|\cl_{\fA_n}(\alpha)|=\frac 1 2  (n-1)!$.
 If $\beta$ is an $(n-2)$-cycle, then $\beta \in \fA_n$ and $|\cl_{\fA_n}(\beta)|=  n!/((n-2)2)$. Now if $3 \nmid n$, then the class of $\alpha$ satisfies the condition. If $3 \mid n$, then $3 \nmid n-2$ and the class of $\beta$ satisfies the condition.

Now let $n$ be even.
 If $\alpha$ is an $(n-1)$-cycle, then $\alpha \in \fA_n$ and $|\cl_{\fA_n}(\alpha)|=\frac 1 2 \cdot \frac {n!} {n-1}$.
  If $\beta$ is an $(n-3)$-cycle, then $\beta \in \fA_n$ and $|\cl_{\fA_n}(\beta)|=\frac {n!} {(n-3) \cdot 6}$.
  Now if $3 \nmid n-1$, then the class of $\alpha$ satisfy the condition.
  If $3 \mid n-1$, then $3 \nmid n-3$ and the class of $\beta$ satisfies the condition.

For sporadic groups, the result can be checked by using ~\cite{CFSG}.
\end{proof}

Given a group $G$, we write $b^*(G)$ to denote the largest size of the conjugacy classes of $G$.

\begin{lemma}\label{conjugacyside}
Let $G$ be a finite group, $P\in \Syl_p(G)$ and $\bar P=P/O_p(G)$; set $n=\overline{ecl}_p(G)$.
Assume that $|G: O_p(G)|_p\leq p^{k n}$. Then $b^*(\bar P)\leq p^{k n}$, and $|\bar P'| \leq p^{kn(kn+1)/2}$.
\end{lemma}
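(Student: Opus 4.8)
The plan is to mimic the proof of Lemma~\ref{chardegreeside}, with ordinary character degrees replaced by conjugacy class sizes. First I would observe that $b^*(\bar P) \leq |\bar P| = |G:O_p(G)|_p \leq p^{kn}$: indeed, every conjugacy class of $\bar P$ has size dividing $|\bar P|$, so this bound is immediate. The harder task is to bound the order of the derived subgroup $\bar P'$ in terms of $b^*(\bar P)$.

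For this I would invoke the classical theorem relating class sizes to the commutator subgroup. The key input is a theorem of Vaughan-Lee (or the analogous result of P.~M.~Neumann), which states that if every conjugacy class of a group $H$ has size at most $m$, then $|H'|$ is bounded by a function of $m$ alone; for $p$-groups one has an explicit bound. More precisely, there is a result giving $|H'| \le m^{\frac{1}{2}(\log_p m + 1)}$ when $H$ is a $p$-group and $b^*(H) = m$ is a power of $p$. Applying this with $H = \bar P$ and $m = b^*(\bar P) \le p^{kn}$, and noting that the stated bound $|\bar P'| \le p^{kn(kn+1)/2}$ is of exactly this form (with $m = p^{kn}$, giving exponent $\tfrac12 kn(kn+1)$), the lemma follows.

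The main obstacle, as I see it, is pinning down the precise form of the class-size-to-commutator bound so that the exponent comes out exactly as $kn(kn+1)/2$. The cited bound should be that for a finite group $H$ with $b^*(H)=p^c$, one has $|H'| \le p^{c(c+1)/2}$; this is attributed in the literature to Vaughan-Lee and is sharp for $p$-groups. With $c = kn$ one gets precisely $|\bar P'| \le p^{kn(kn+1)/2}$. So the write-up amounts to: (i) quote $b^*(\bar P)\le p^{kn}$ from $|\bar P| \le p^{kn}$, then (ii) apply the Vaughan-Lee bound to $\bar P$ with class bound $p^{kn}$. One should be careful that the reference genuinely gives the $c(c+1)/2$ exponent and not merely a polynomial bound; if only a weaker bound were available one would have to adjust the constant in the statement, but the form stated strongly suggests the sharp Vaughan-Lee estimate is what is intended.

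\begin{proof}
Since $\bar P$ is a $p$-group with $|\bar P| = |G:O_p(G)|_p \le p^{kn}$, every conjugacy class of $\bar P$ has size dividing $|\bar P|$, so $b^*(\bar P) \le p^{kn}$. By the theorem of Vaughan-Lee on the commutator subgroup of a group with bounded conjugacy class sizes, a $p$-group $H$ with $b^*(H) = p^c$ satisfies $|H'| \le p^{c(c+1)/2}$. Applying this to $H = \bar P$ with $c \le kn$ yields $|\bar P'| \le p^{kn(kn+1)/2}$.
\end{proof}
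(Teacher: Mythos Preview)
Your proof is correct and matches the paper's own argument essentially verbatim: the paper observes that $|x^{\bar P}|=|\bar P:\bC_{\bar P}(x)|\le p^{kn}$ for all $x\in\bar P$, and then invokes the Vaughan-Lee theorem (cited as \cite[Theorem VIII.9.12]{Huppert2}) to bound $|\bar P'|$. Your identification of the precise exponent $c(c+1)/2$ from Vaughan-Lee's result is exactly what is needed.
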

\begin{proof}
It is clear that for $x \in \bar P$, we have $|x^{\bar P}|=|\bar P:\bC_{\bar P}(x)| \leq p^{k n}$.

To obtain the bounds for the order of $\bar P'$ it suffices to apply a theorem of Vaughan-Lee \cite[Theorem VIII.9.12]{Huppert2}.
\end{proof}

\begin{theorem}\label{conjugacyboundpsolvable}
Let $G$ be a finite $p$-solvable group for an odd prime $p$, $P \in \Syl_p(G)$, $\bar P=P/O_p(G)$; set $n=\overline{ecl}_p(G)$.
Then there exists a constant $k$ such that
$|G: O_p(G)|_p \leq p^{k n}$, $b^*(\bar P)\leq p^{k n}$, and $|\bar P'| \leq p^{k n(kn+1)/2}$ where $k=4.5$ if $p\geq 5$, and $k=17$ if $p=3$.
\end{theorem}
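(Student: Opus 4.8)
The plan is to follow exactly the blueprint of the proof of Theorem~\ref{chardegreepsolvablebound}, substituting the conjugacy-class ingredients for the character-degree ones at each step. In view of Lemma~\ref{conjugacyside}, it suffices to establish the bound $|G:O_p(G)|_p \leq p^{kn}$; the bounds for $b^*(\bar P)$ and $|\bar P'|$ then follow immediately. So the whole task reduces to bounding the $p$-part of $G/O_p(G)$.

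First I would let $T$ be the maximal normal solvable subgroup of $G$, so that $O_p(T)=O_p(G)$, and observe that since $T \trianglelefteq G$, the hypothesis $p^{n+1} \nmid |C|$ for all $C \in \clsize_{p'}(T)$ is inherited (by the remark following Lemma~\ref{lem1}, or directly by Lemma~\ref{lem1}(1)). Hence $\overline{ecl}_p(T) \leq n$, and Theorem~\ref{conjugacyboundsolvable} (for $p=3$) or Theorem~\ref{conjugacyboundsolvablep5} (for $p\geq 5$) gives $|T:O_p(G)|_p \leq p^{2.5n}$ when $p\geq 5$ and $|T:O_p(G)|_p \leq p^{15n}$ when $p=3$. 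Next, set $\bar G = G/T$ and $\hat G = \bar G/\bF^*(\bar G)$; since $G$ is $p$-solvable and $T$ absorbs all solvable normal subgroups, $\bF^*(\bar G)$ is a direct product of non-abelian simple groups of order divisible by $p$, and $\bar G$ has no nontrivial solvable normal subgroup. By Theorem~\ref{classppart} applied to $\bar G$ there is $C \in \cl_{p'}(\bar G)$ with $|C|_p \geq \sqrt{|\bar G|_p}$; pulling this $p$-regular class back through $G \to \bar G$ via Lemma~\ref{lem1}(2) and using $\overline{ecl}_p(\bar G)\le n$, we get $|\bar G|_p \leq p^{2n}$. Combining, $|G:O_p(G)|_p \leq |G:T|_p\,|T:O_p(G)|_p \leq p^{2n}\cdot p^{2.5n}=p^{4.5n}$ for $p\geq 5$, and $\leq p^{2n}\cdot p^{15n}=p^{17n}$ for $p=3$.

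For the final translation step I would note that Lemma~\ref{conjugacyside} with the value of $k$ just obtained yields $b^*(\bar P) \leq p^{kn}$ directly from $|\bar P : \bC_{\bar P}(x)| \leq |\bar P|$ and the class-size hypothesis inside $\bar P$ (which again is inherited since $\bar P \cong PT/T \cdot$ stuff is a section; more precisely one passes to $\bar P$ as a quotient of a subgroup), and then Vaughan-Lee's theorem \cite[Theorem VIII.9.12]{Huppert2} bounds $|\bar P'|$ by $p^{kn(kn+1)/2}$.

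The main obstacle, as in the character case, is really hidden in the two invoked results rather than in the assembly: Theorem~\ref{classppart} rests on the class-size orbit theorem (Theorem~\ref{classorbit}), whose proof mirrors Theorem~\ref{charorbit} but requires that Lemma~\ref{classsimple} ($|cs(S)|\geq 4$ for non-abelian simple $S$) play the role of Lemma~\ref{chardegsimple}, and one must be careful that in the $p$-solvable quotient $O/L$ the relevant outer-automorphism contribution still has abelian normal Sylow $p$-subgroup and that $p\nmid |L|$ so that one may genuinely choose $p$-regular representatives throughout. The only genuinely new checking, compared to Theorem~\ref{chardegreepsolvablebound}, is making sure the inheritance of the hypothesis and the pull-back of $p$-regular classes go through cleanly for both subgroups and quotients — but that is exactly what Lemma~\ref{lem1} and the remark after it were set up to provide, so no surprises are expected.
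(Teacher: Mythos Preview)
Your proof is correct and follows essentially the same approach as the paper, which simply says the argument is the class-size analogue of Theorem~\ref{chardegreepsolvablebound} and then invokes Lemma~\ref{conjugacyside}. One small slip: since $G$ is $p$-solvable and $T$ is the maximal solvable normal subgroup, the simple factors of $\bF^*(\bar G)$ have order \emph{coprime} to $p$ (not divisible by $p$ as you wrote); this is harmless here because Theorem~\ref{classppart} only needs $\bar G$ to be $p$-solvable with no nontrivial solvable normal subgroup, which you correctly verify.
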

\begin{proof}
This is the class size version of Theorem ~\ref{chardegreepsolvablebound}, and the proof is similar. We first obtain the bound for $|G: O_p(G)|_p$, and then apply Lemma ~\ref{conjugacyside} to obtain the other parts.
\end{proof}

\begin{lemma}\label{qianinductionconj}
Let  $G$, $N$, $p$ be as in Hypothesis ~\ref{hypothesis}. If there exists $C_i \in \cl_{p'}(W_i)$ such that $|\Aut(W_i)|_p < |C_i|_p^k$ for every $i=1,\dots,s$, then there exists $C \in \cl_{p'}(N)$ such that $|G|_p<|C|_p^k$.
\end{lemma}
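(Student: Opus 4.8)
The plan is to transcribe the argument of Lemma~\ref{qianinductionchar} (i.e.\ \cite[Lemma 2.6]{QIAN}) from Brauer character degrees to $p$-regular class sizes. First I would exploit Hypothesis~\ref{hypothesis}: since $\bC_G(N)=1$, the group $G$ embeds into $\Aut(N)$. The conjugation action of $G$ on the set $\{W_1,\dots,W_s\}$ of simple direct factors gives a homomorphism $G\to\Sym(s)$; let $G_0$ be its kernel. Then $G_0$ normalizes each $W_i$, hence maps into $\prod_{i=1}^s\Aut(W_i)$, and the kernel of this map is $\bC_{G_0}(N)\le \bC_G(N)=1$, so $G_0\hookrightarrow\prod_i\Aut(W_i)$; also $G/G_0\hookrightarrow\Sym(s)$. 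Consequently
\[
|G|_p \;\le\; |G_0|_p\,|\Sym(s)|_p \;\le\; |\Sym(s)|_p\prod_{i=1}^s|\Aut(W_i)|_p .
\]

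Next I would record two elementary inequalities. By Legendre's formula $v_p(s!)=\tfrac{s-s_p(s)}{p-1}\le \tfrac{s-1}{p-1}\le s-1$, so $|\Sym(s)|_p\le p^{\,s-1}$. And for each $i$ the hypothesis gives $|\Aut(W_i)|_p<|C_i|_p^{\,k}$; since both sides are powers of $p$, this forces $|\Aut(W_i)|_p\le |C_i|_p^{\,k}/p$. Then I would construct the class: writing $C_i=x_i^{W_i}$ with $x_i\in W_i$ a $p$-regular element, put $x=x_1x_2\cdots x_s\in N$; the $x_i$ commute and have $p'$-order, so $x$ is $p$-regular and $C:=x^N\in\cl_{p'}(N)$ satisfies $|C|=\prod_{i=1}^s|C_i|$, whence $|C|_p=\prod_i|C_i|_p$. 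Combining everything,
\[
|G|_p \;\le\; p^{\,s-1}\prod_{i=1}^s|\Aut(W_i)|_p \;\le\; p^{\,s-1}\prod_{i=1}^s\frac{|C_i|_p^{\,k}}{p}\;=\;\frac{1}{p}\prod_{i=1}^s|C_i|_p^{\,k}\;=\;\frac{|C|_p^{\,k}}{p}\;<\;|C|_p^{\,k},
\]
which is the assertion.

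The only point that needs any care is the contribution of the permutation action of $G$ on the simple factors, i.e.\ the $|\Sym(s)|_p$ term in $|\Aut(N)|_p$: this is what rules out a purely factorwise argument. It is absorbed exactly by the one extra factor of $p$ per simple factor that the \emph{strict} inequality $|\Aut(W_i)|_p<|C_i|_p^{\,k}$ supplies, together with the crude bound $v_p(s!)\le s-1$. Everything else — the embedding $G\hookrightarrow\Aut(N)$, the choice of a $p$-regular representative of $N$, and the factorization $|C|=\prod_i|C_i|$ — is routine, so I do not expect a genuine obstacle; the lemma is essentially the $p$-regular class-size transcription of Lemma~\ref{qianinductionchar}, and one could alternatively just state that the proof is the same as there.
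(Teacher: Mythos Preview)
Your argument is correct and is precisely the expected transcription of \cite[Lemma~2.6]{QIAN} to $p$-regular class sizes, which is exactly what the paper's proof says to do. The key points you identify --- the embedding $G\hookrightarrow\Aut(N)$, the bound $|\Sym(s)|_p\le p^{s-1}$, and the extra factor of $p$ supplied per simple factor by the strict inequality --- are the ones in Qian's proof, so your proposal matches the paper.
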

\begin{proof}
The proof is the same as that of ~\cite[Lemma 2.6]{QIAN}.
\end{proof}

\begin{theorem}\label{conjugacygeneralbound}
Let $G$ be a finite group, $p$ a prime, $P \in \Syl_p(G)$ and $\bar P=P/O_p(G)$;
set $n=\overline{ecl}_p(G)$.
We set $k=6.5$ if $p\geq 5$, $k=19$ if $p=3$, and $k=17$ if $p=2$. Then $|G: O_p(G)|_p \leq p^{kn}$, $b^*(\bar P)\leq p^{k n}$, and $|\bar P'| \leq p^{k n(k n+1)/2}$.
\end{theorem}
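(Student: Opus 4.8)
The plan is to mirror the proof of Theorem~\ref{chardegreegeneralbound} verbatim, replacing each Brauer-character ingredient by its conjugacy-class analogue, all of which have been set up in this section. First I would let $T$ be the maximal normal $p$-solvable subgroup of $G$; since $O_p(G)\le T$ we have $O_p(T)=O_p(G)$, and since $T\nor G$ the hypothesis that $p^{n+1}$ divides no $p$-regular class size of $G$ passes to $T$ by Lemma~\ref{lem1}(1) (together with the quotient remark). Applying Theorem~\ref{conjugacyboundpsolvable} when $p$ is odd, and Theorem~\ref{conjugacyboundsolvable} when $p=2$, gives $|T:O_p(G)|_p\le p^{4.5n}$ for $p\ge 5$, $\le p^{17n}$ for $p=3$, and $\le p^{15n}$ for $p=2$.

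Next I would pass to $\bar G=G/T$. Since $T$ is the maximal normal $p$-solvable subgroup, $\bar G$ has no nontrivial normal $p$-solvable (in particular no nontrivial solvable normal) subgroup, so $\bF^*(\bar G)$ is a direct product of non-abelian simple groups, each of order divisible by $p$ (otherwise a component would be a normal $p'$-group, hence $p$-solvable). Thus $\bar G$ and $N=\bF^*(\bar G)$ satisfy Hypothesis~\ref{hypothesis}. For each simple factor $W_i$, Lemma~\ref{simplepconjugacy} supplies, when $p\ge 3$, a class $C_i\in\cl_{p'}(W_i)$ with $|\Aut(W_i)|_p<|C_i|_p^2$; for $p=2$ I would need the exponent $9$ version, i.e.\ the conjugacy-class analogue of Corollary~\ref{simplepchar} (for $p=2$, $|\Aut(S)|_2<|C|_2^9$), which follows from the sporadic/alternating exceptions exactly as in the Brauer-character case. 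Feeding this into Lemma~\ref{qianinductionconj} yields a class $C\in\cl_{p'}(N)\subseteq\cl_{p'}(\bar G)$ with $|\bar G|_p<|C|_p^k$; since $|C|_p=|\bar G:\bC_{\bar G}(C)|_p$ divides some $p$-regular class size of $G$ (by Lemma~\ref{lem1}(2) and the quotient remark), we get $|C|_p\le p^n$, hence $|\bar G|_p=|G:T|_p\le p^{2n}$ for $p\ge 5$, $\le p^{3n}$ for $p=3$, and $\le p^{9n}$ for $p=2$.

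Combining, $|G:O_p(G)|_p=|G:T|_p\,|T:O_p(G)|_p$ is at most $p^{6.5n}$ for $p\ge 5$, $p^{20n}$ for $p=3$ (but note the claimed constant is $19$, so for $p=3$ I would instead take the sharper solvable bound $|T:O_p(G)|_p\le p^{16n}$ coming from the odd-order refinement, or re-examine the $p$-solvable estimate to shave one unit — this is the one place where the analogy with Theorem~A is not literal), and $p^{17n}$ for $p=2$, since $9+(\text{solvable part})$ must total $17$, forcing the $p=2$ solvable bound to be $8n$ rather than $15n$; so for $p=2$ I would use the Espuelas--Navarro-type block-of-defect-$\le a/2$ estimate (as in the remark after Theorem~\ref{solvableepboundp5}) applied to the class-size side via Section~15 of \cite{Isaacs/book}, giving $|T:O_p(G)|_p\le p^{8n}$. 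Finally, the bounds $b^*(\bar P)\le p^{kn}$ and $|\bar P'|\le p^{kn(kn+1)/2}$ follow immediately from Lemma~\ref{conjugacyside}. The main obstacle is precisely the bookkeeping of constants for $p=2$ and $p=3$: one must locate sharper solvable-case inputs (odd-order or small-defect block results) so that the simple-group contribution plus the $p$-solvable contribution add up to the stated $17$ and $19$ rather than to the cruder $24$ and $20$ of the character version.
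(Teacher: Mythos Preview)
Your overall architecture (pass to the maximal normal $p$-solvable subgroup $T$, bound $|T:O_p(G)|_p$ by the $p$-solvable/solvable theorems, then bound $|G/T|_p$ via Hypothesis~\ref{hypothesis} and Lemma~\ref{qianinductionconj}) is exactly what the paper does. The place where you go astray is the simple-group input, and this in turn leads you to chase nonexistent ``sharper solvable bounds''.

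Concretely: Lemma~\ref{simplepconjugacy} gives the exponent~$2$ for \emph{every} odd prime, including $p=3$; you even quote it correctly (``$|\Aut(W_i)|_p<|C_i|_p^2$ for $p\ge 3$'') but then immediately write $|\bar G|_p\le p^{3n}$ for $p=3$, which is a slip. Using the correct exponent~$2$ gives $17+2=19$ on the nose, with no need to shave the $p$-solvable bound. For $p=2$ the paper likewise uses $|\bar G|_p\le p^{2n}$ (so $15+2=17$), not the exponent~$9$ coming from Corollary~\ref{simplepchar}; the point is that the conjugacy-class situation is strictly better than the Brauer-character situation here, because the exceptions $M_{22}$ and $\fA_7$ that force the large constants in Corollary~\ref{simplepchar} do not survive on the class-size side. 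Your proposed workaround for $p=2$ --- invoking an Espuelas--Navarro-type estimate to get $|T:O_p(G)|_p\le p^{8n}$ --- cannot work as stated, since the Espuelas--Navarro result is for groups of \emph{odd} order and says nothing about $p=2$. In short: fix the simple-group exponent to~$2$ uniformly, and the constants $6.5$, $19$, $17$ drop out of $4.5+2$, $17+2$, $15+2$ with no further effort; the final appeal to Lemma~\ref{conjugacyside} is then correct.
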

\begin{proof}
Let $T$ be the maximal normal $p$-solvable subgroup of $G$. Since $O_p(G) \leq T$, $O_p(T)=O_p(G)$. Since $T \nor G$, $p^{n+1}$ does not divide $|C|$ for all $C \in \cl_{p'}(T)$.

If $p \geq 5$, then $|T: O_p(G)|_p \leq p^{4.5n}$ by Theorem ~\ref{conjugacyboundpsolvable}. If $p=3$, then $|T: O_p(G)|_p \leq p^{17n}$ by Theorem ~\ref{conjugacyboundpsolvable}. If $p=2$, then $|T: O_p(G)|_p \leq p^{15n}$ by Theorem ~\ref{conjugacyboundsolvable}.

We now consider $\bar G=G/T$, we know that $\bF^*(\bar G)$ is a direct product of non-abelian simple groups, where $p$ divides the order of each of them.

Since $\bar G$ and $\bF^*(\bar G)$ satisfy Hypothesis ~\ref{hypothesis}, by Lemma ~\ref{qianinductionconj} and Lemma ~\ref{simplepconjugacy}, we have that $|\bar G|_p \leq p^{2n}$.

Thus, we have,
\begin{enumerate}
\item $|G:O_p(G)|_p \leq |G:T|_p |T: O_p(G)|_p  \leq p^{6.5 n}$ if $p \geq 5$.
\item $|G:O_p(G)|_p \leq |G:T|_p |T: O_p(G)|_p  \leq p^{19 n}$ if $p=3$.
\item $|G:O_p(G)|_p \leq |G:T|_p |T: O_p(G)|_p  \leq p^{17 n}$ if $p=2$.
\end{enumerate}

The bounds for $b^*(\bar P)$ and $|\bar P'|$ follow from Lemma ~\ref{conjugacyside}.
\end{proof}

\section{Acknowledgement} \label{sec:Acknowledgement}
This work was partially supported by the NSFC (No 11671063), and a grant from the Simons Foundation (No 499532, YY).


\end{document}